\DeclareMathOperator{\st}{s.t.}
\DeclareMathOperator{\diag}{diag}
\DeclareMathOperator{\Diag}{Diag}
\newtheorem{theorem}{Theorem}
\newtheorem{lemma}{Lemma}
\newtheorem{assumption}{Assumption}
\newtheorem{definition}{Definition}
\newtheorem{remark}{Remark}[section]
\numberwithin{remark}{section}
\newtheorem{example}{Example}
\newcommand{\Ep}{{\mathrm{E}}}
\newcommand{\CP}{{\mathcal{CP}}}
\newcommand{\COP}{{\mathcal{COP}}}
\newcommand{\IA}{{\mathrm{IA}}}
\newcommand{\X}{{\mathcal{X}}}
\newcommand{\M}{{\mathcal{M}}}
\newcommand{\MM}{\mathrm{M}}
\newcommand{\SYM}{{\Bbb S}}
\newcommand{\PP}{\Bbb P}
\newcommand{\hP}{\widehat{\Bbb{P}}}
\newcommand{\D}{{\mathcal D}}
\newcommand{\DPe}{{\mathcal D}(\PP_N,\; \epsilon)}
\newcommand{\QQ}{\Bbb Q}
\def\RR{ {\Bbb{R}}}
\newcommand{\B}{{\mathcal B}}
\title{A Data-Driven Distributionally Robust Bound \\ 
on the Expected Optimal Value \\
of Uncertain Mixed 0-1 Linear Programming }
\author{%
Guanglin Xu\thanks{Department of Management Sciences, University of Iowa,
Iowa City, IA, 52242-1994, USA. Email: {\tt guanglin-xu@uiowa.edu}.}%
\ \ \ \ \ \ \
Samuel Burer\thanks{Department of Management Sciences, University of Iowa,
Iowa City, IA, 52242-1994, USA. Email: {\tt samuel-burer@uiowa.edu}.}%
}
\date{August 24, 2017}
\begin{document}
\maketitle

\begin{abstract}

\noindent This paper studies the expected optimal value of a mixed 0-1 programming 
problem with uncertain objective coefficients following a joint distribution. We assume that
the true distribution is not known exactly, but a set of independent samples can be
observed. Using the Wasserstein metric, we construct an ambiguity set centered at the
empirical distribution from the observed samples and containing the true distribution with 
a high statistical guarantee. The problem
of interest is to investigate the bound on the expected optimal value over the Wasserstein
ambiguity set. Under standard assumptions, we reformulate the problem into a copositive
program, which naturally leads to a tractable semidefinite-based approximation. We compare 
our approach with a moment-based approach from the literature on three applications.
Numerical results illustrate the effectiveness of our approach.

\mbox{}

\noindent Keywords: Distributionally robust optimization; Wasserstein metric; 
copositive programming; semidefinite programming
\end{abstract}

\begin{onehalfspace}

\section{Introduction} \label{sec:drlo_intro}

We consider the following uncertain mixed 0-1
linear programming problem:
\begin{equation} \label{equ:mip}
v(\xi) := \max \left\{ (F\xi)^Tx : \begin{array}{l} Ax = b, \
x \ge 0 \\ x_j \in \{0,1\} \ \forall \, j \in \B  \end{array} \right\}
\end{equation}
where $A \in \RR^{m \times n}$, $F \in \RR^{n \times k}$,
and $b \in \RR^m$ are the problem data,  $x \in \RR^n_+$ is the 
vector of decision variables, $\B \subseteq \{ 1, \ldots, n \}$
is an index set of binary variables, and the objective coefficients are linear in
the random vector $\xi \in \RR^k$ via $F$.
Problem (\ref{equ:mip}) entails two extreme
classes of programs: if $\B = \emptyset$, then
(\ref{equ:mip}) represents the regular linear program
with uncertain objective coefficients;
if $\B = \{ 1, \ldots, n \}$, then (\ref{equ:mip})
represents the regular binary program with uncertain
coefficients. In general, problem (\ref{equ:mip}) is NP-hard
\cite{Wolsey.1998}. 

The optimal value $v(\xi)$ is a random variable as $\xi$
is a random vector. We assume 
that $\xi$ follows a multivariate distribution
$\PP$ supported on a nonempty set $\Xi \subseteq \RR^k$,
which is, in particular, defined as a slice of a closed, convex, 
full-dimensional cone $\widehat \Xi \subseteq \RR_+ \times \RR^{k-1}$:
\[
\Xi := \left\{ \xi \in \widehat \Xi : e_1^T\xi =  \xi_1 = 1 \right \},
\]
where $e_1$ is the first standard basis vector in $\RR^k$. In
words, $\widehat \Xi$ is the homogenization of $\Xi$. We
choose this homogenized version for notational convenience.
Note that it, in fact, enables us to model affine effects of the uncertain
parameters in (\ref{equ:mip}). 

The expected optimal value of (\ref{equ:mip}), 
denoted by $v_{\PP}$, is defined as
\[
v_{\PP} := \Ep_{\PP} [v(\xi)] = \int_{\Xi} v(\xi) \, d\,\PP( \xi).
\]
The problem of computing $v_{\PP}$ has been extensively studied in
the literature. Hagstrom \cite{Hagstrom.1988}
showed that computing $v_{\PP}$ for the longest path problem over a
directed acyclic graph is $\#\mathcal{P}$-complete even if the arc lengths
are each independently distributed and restricted to taking two possible
values. Aldous \cite{Aldous.2001} studied a linear assignment
problem with random cost coefficients following either an independent
uniform distribution on $[0,1]$ or an exponential distribution with
parameter 1 and proved that the asymptotic value of $v_{\PP}$
approaches $\frac{\pi^2}{6}$ as the number of assignments goes
to infinity. For additional studies, see \cite{Bowman.1995, Dodin.1985, Mohring.2001}.

In practice, it is difficult or impossible to know $\PP$
completely, and computing $v_{\PP}$ is thus not well defined 
in this situation. An alternative is to construct an ambiguity set, denoted
by $\D$, that contains a family of distributions supported on $\Xi$
and consistent with any known properties of $\PP$.
Ideally, the ambiguity set will possess some statistical guarantee, 
e.g., the probability that $\PP \in \D$ will be at least $1 - \beta$, where
$\beta$ is the significance level. In analogy with $v_{\PP}$, 
we define $v_{\QQ}$
for any $\QQ \in \D$. Then, we are interested in computing 
the maximum expected optimal value $v_{\QQ}$
over the ambiguity set $\D$:
\begin{equation} \label{equ:vdp}
v_{\D}^+ :=  \sup_{\QQ \in \D} v_{\QQ}.
\end{equation}
Note that, when the probability of $\PP \in \D$ is at least
$1 - \beta$, the probability of $v_{\PP} \le v_{\D}^+$ is
at least $1 - \beta$. 

There are three main issues (somehow conflicting) regarding 
the computation of $v_{\D}^+$. First, one would like an ambiguity 
set $\D$ with a high statistical guarantee to contain the true distribution $\PP$. 
In this way, the computed $v_{\D}^+$ will be an upper bound 
on $v_{\PP}$ with a high confidence level. (We will introduce several 
approaches in the following paragraph.) Second, one would like
$v_{\D}^+$ to be tight in the sense that it is as close to
$v_{\PP}$ as possible. Generally, if $\D$ enforces more information 
about $\PP$, then $v_{\D}^+$ will be closer to $v_{\PP}$. Finally, the third 
concern is the complexity of the resulting optimization problem, 
i.e., whether the problem can be solved in polynomial time.

Bertsimas et al.~\cite{Bertsimas.Natarajan.Teo.2004,
Bertsimas.Natarajan.Teo.2006} constructed moment ambiguity sets
using the first two marginal moments of each $\xi_i$. Denote
the first and second of each uncertain parameter by $\mu_i$ and $\sigma_i$
respectively. They computed $v_{\D}^+$ over all 
joint distributions sharing the same first two marginal moments and proved 
polynomial-time computability if the corresponding deterministic 
problem is solvable in polynomial time. However, the computed bound
may not be tight with respect to $v_{\PP}$ since the marginal-moment 
model does not capture the dependence of the random variables. 
In a closely related direction, Natarajan et al.~\cite{Natarajan.Song.Teo.2009}
proposed an ambiguity set that was constructed from the
known marginal distributions of each random variable $\xi_i$,
and they computed $v_{\D}^+$ by solving a concave
maximization problem. As an extension to the marginal moment-based
approach, Natarajan et al.~\cite{Natarajan.Teo.Zheng.2011} proposed
a cross-moment model that was based on an ambiguity
set constructed using both marginal and cross moments.
Compared to the marginal-moment approach, the cross-moment approach 
has tighter upper bounds as the model captures the dependence of the random variables.
However, computing the bound requires solving a completely
positive program, which itself can only be approximated in general. 
Thus, the authors proposed semidefinite programming (SDP) 
relaxations to approximate $v_{\D}^+$.  

Moment-based ambiguity sets are also used prominently
in a parallel vein of research, called distributionally robust
optimization (DRO); see \cite{Bertsimas.Popescu.2005, Calafiore.Ghaoui.2006,
Chen.Sim.Xu.2016, Delage.Ye.2010, Dupavcova.1987,
Erdougan.Iyengar.2006, Ghaoui.Oks.Oustry.2003, 
Hanasuanto.Kuhn.Wallace.Zymler.2015, Natarajan.Teo.2017, 
Vandenberghe.Boyd.Comanor.2007, Zymler.Kuhn.Rustem.2013_1,
Zymler.Kuhn.Rustem.2013_2}. 
The popularity of the moment-based approach is mainly due to the fact that
it often leads to tractable optimization problems and relatively
simple models. Its weakness, however, is that moment-based sets
are not guaranteed to converge to the true distribution $\PP$ when 
the sample size increases to infinity, even though the
estimations of the first and second moments are themselves guaranteed
to converge. 

As an attractive alternative to moment-based ambiguity sets, 
distance-based ambiguity sets haven been proposed in recent years. 
This approach defines $\D$ as a ball in the space of probability distributions equipped with a distance
measure, and the center of the ball is typically the empirical distribution
derived from a series of independent realizations of the random vector
$\xi$. The key ingredient of this approach is the distance function.
Classical distance functions include the Kullback-Leibler
divergence~\cite{Hu.Hong.2013, Jiang.Guan.2015},
the $\phi$-divergence~\cite{Ben.Hertog.Waegenaere.Melenberg.Rennen.2013,
Duchi.Glynn.Namkoong.2016, Klabjan.Simchi.Song.2013},
the Prohorov metric~\cite{Erdougan.Iyengar.2006}, empirical Burg-entropy
divergence balls~\cite{Lam.2016}, and the 
Wasserstein metric~\cite{Pflug.Wozabal.2007, Wiesemann.Kuhn.Sim.2014}.

In this paper, we apply the Wasserstein metric to construct a data-driven
ambiguity set $\D$ centered at the empirical distribution $\widehat \PP_N$
derived from $N$ independent observations of $\xi$. This approach has
several benefits. The conservativeness of the ambiguity set
can be controlled by tuning a single parameter, the radius of the
Wasserstein ball; we will discuss this parameter in detail
in Section \ref{sec:wass}. Also, under mild conditions on $\PP$,
the Wasserstein ambiguity provides a natural confidence set for $\PP$. 
Specifically, the Wasserstein ball around the empirical distribution on 
$N$ independent identical samples contains $\PP$ with confidence $1-\beta$ 
if its radius exceeds an explicit threshold $\epsilon_N(\beta)$ that can be computed 
via a closed form equation \cite{Eichfelder.Jahn.2008, Esfahani.Khun.2015}. 
We then formulate $v_{\D}^+$ in (\ref{equ:vdp}) over the constructed 
Wasserstein ambiguity set. That is, we model the maximum value of 
$v_{\QQ}$ over the ambiguity set $\D$ constructed by the Wasserstein 
metric. In Section \ref{sec:copositive}, we reformulate problem (\ref{equ:vdp}) 
into a copositive problem under some standard assumptions. As the copositive 
reformulation is computationally intractable, we apply a standard approach 
based on semidefinite programming techniques to approximate $v_{\D}^+$ 
from above. In Section \ref{sec:example}, we numerically verify our approach 
on three applications from the literature. In particular, we compare our 
approach with the moment-based approach proposed in 
\cite{Natarajan.Teo.Zheng.2011}. We have several important 
observations from the experimental results. First, we find that the gaps 
between the bound from our semidefinite programs 
and the true expected optimal value becomes narrower as the sample size 
increases. However, the moment-based bound remains the same regardless
of the increase in the sample size. Second, we observe that our bound 
converges to the true expected optimal value on the first two applications
where the underlying deterministic problems are linear programs. Although
our bound on the third application is not able to converge to the true
expected optimal value, it is tighter than the moment-based bound after
the sample size increases to a certain level.  We conclude our research 
and discuss some future directions in Section \ref{sec:conclusion}. 

We point out some similarities of our paper to a recent technical
report by Hanasusanto and Kuhn \cite{Hanasusanto.Kuhn.2016}. In their
report, they proposed a Wasserstein-metric ambiguity set for a two-stage
DRO problem. In particular, they applied copositive programming techniques
to reformulate the second-stage worst-case value function, which is essentially
a max-min optimization problem, while we use copositive techniques to reformulate
a max-max optimization problem; see (\ref{equ:bound}). Furthermore, 
they directly used a hierarchy schema to approximate the copositive cones,
while we derive natural SDP approximations based on the copositive 
reformulation. Note that their hierarchy of approximations lead to SDP 
approximations as well. Finally, they developed an approach to derive an 
empirical Wasserstein radius, which is in spirit similar to our approach 
in this paper.

\subsection{Notation, terminology, and basic techniques}
 
We denote by $\RR^n$ the $n$-dimensional Euclidean space
and by $\RR_+^n$ the nonnegative orthant in $\RR^n$.
For a scalar $p \ge 1$, the $p$-norm of $z \in \RR^n$ is
defined $\|z\|_p := (\sum_{i=1}^n |z_i|^p)^{1/p}$, e.g., $\|z\|_1 =
\sum_{i=1}^n |z_i|$. We will drop the subscript for the $2$-norm, i.e.,
$\|z\| := \|z\|_2$. For $v,w \in \RR^n$, the inner product of $v$ and
$w$ is denoted by $v^Tw := \sum_{i=1}^n v_iw_i$. For the 
specific dimensions $k$ and $n$ of the problem in this paper, 
we denote by $e_i$ the $i$-th standard basis vector in $\RR^k$, 
and similarly, denote by $f_j$ the $j$-th standard basis vector 
in $\RR^n$. We will also define $g_1 := {e_1 \choose 0} \in \RR^{k+n}$. 
We denote by $\delta_{\xi}$ the Dirac distribution concentrating unit mass
at $\xi \in \RR^k$. For any $N \in \mathbb{N}$, 
we define $[N] := \{1, \ldots, N\}$. 

Let $\RR^{m \times n}$ denote the space of real $m \times n$
matrices, and $A\bullet B := \text{trace}(A^TB)$ denote the 
trace of the inner product of two matrices $A, B \in \RR^{m \times n}$. 
We denote by $\SYM^n$ the space of $n \times n$ symmetric matrices, 
and for $X \in \SYM^n$, $X \succeq 0$ represents that $X$ is positive 
semidefinite. In addition, we denote by $\diag(X)$ the vector containing 
the diagonal entries of $X$, and denote by $\Diag(v)$ the diagonal 
matrix with vector $v$ along its diagonal. $I \in \SYM^n$ denotes 
the identity matrix. 

Finally, letting ${\cal K} \subseteq \RR^n$ be a closed,
convex cone, and ${\cal K}^*$ be its dual cone, we give
a brief introduction to {\em copositive programming} with
respect to the cone ${\cal K}$. The {\em copositive cone\/} with respect
to $\cal K$ is defined as
\[
    \COP({\cal K}) := \{ M \in \SYM^n: x^T M x \ge 0 \ \forall \ x \in {\cal K} \},
\]
and its dual cone, the {\em completely positive cone\/} with respect to
${\cal K}$, is given as
\[
    \CP({\cal K}) := \{ X \in \SYM^n: X = \textstyle{\sum_i} x^i (x^i)^T, \ x^i \in {\cal K} \},
\]
where the summation over $i$ is finite but its cardinality is
unspecified. The term {\em copositive programming\/} refers to linear
optimization over $\COP({\cal K})$ or, via duality, linear optimization
over $\CP({\cal K})$. In fact, these problems are sometimes called
{\em generalized copositive programming\/} or {\em set-semidefinite
optimization\/} \cite{Burer.Dong.2012,Eichfelder.Jahn.2008} in contrast
with the standard case ${\cal K} = \RR_+^n$. In this paper, we work with
generalized copositive programming, although we use the shorter phrase
for convenience.

\section{A Wasserstein-Based Ambiguity Set} \label{sec:wass}

In this section, we define the Wasserstein metric and discuss 
a standard method to construct a Wasserstein-based ambiguity set.
Using this ambiguity set, we fully specify problem (\ref{equ:vdp}). 

Denote by ${\widehat \Theta}_N := \{\hat \xi^1, \ldots, \hat \xi^N\}$ 
the set of $N$ independent samples of $\xi$ 
governed by $\PP$. The uniform empirical 
distribution based on ${\widehat \Theta}_N$ is 
$\widehat \PP_N := \dfrac1N \sum_{i=1}^N \delta_{\hat \xi^i}$
where $\delta_{\zeta}$ is the Dirac distribution concentrating
unit mass at $\zeta \in \RR^k$.

\begin{definition} [Definition 3 in \cite{Hanasusanto.Kuhn.2016}]
Let $\M^2(\Xi)$ be the set of all probability distributions
$\QQ$ that are supported on $\Xi$ and that satisfy $\Ep_{\QQ}[\|\xi 
- \xi'\|^2] = \int_{\Xi}\|\xi - \xi'\|^2 d\,\QQ(\xi) < \infty$ where 
$\xi' \in \Xi$ is some reference point, e.g., $\xi' = \hat \xi^i$ for some $i \in [N]$. 
\end{definition}

\begin{definition} [Definition 3 in \cite{Hanasusanto.Kuhn.2016}]
The $2$-Wasserstein distance between any $\QQ, \QQ' \in {\cal M}^2(\Xi)$
is  
\[
W^2(\QQ, \QQ') := \inf \left\{ \left (\int_{\Xi^2} \|\xi - \xi'\|^2 \, \Pi (d\xi, d\xi') \right )^{1/2} : 
\begin{array}{l} \text{$\Pi$ is a joint distribution of $\xi$ and $\xi'$} \\
                       \text{with marginals $\QQ$ and $\QQ'$, respectively}  \end{array} \right\}.
\]
\end{definition} 

\begin{remark}
The Wasserstein distance is essentially the minimum cost of redistributing mass
from $\QQ$ to $\QQ'$. It is also called the ``earth mover's distance" in
the community of computer science; see \cite{Rubner.Tomasi.Guibas.2000}.
In fact, the Wasserstein distance between two discrete distributions with 
a finite number of positive masses corresponds to a transportation planning 
problem in finite dimensions. 
\end{remark}

Example \ref{ex:ex1} illustrates the Wasserstein distance between two
discrete distributions.
\begin{example} \label{ex:ex1}
Consider two discrete distributions: $\QQ := \sum_{i=1}^Mq_i \delta_{\xi_i}$
and $\QQ' := \sum_{j=1}^{M'} q'_j\delta_{\xi'_j}$ where $q_i \geq 0  \ i = 1, \ldots, M$, 
$q'_j \geq 0 \ j = 1, \ldots, M'$, and $\sum_{i=1}^M q_i = \sum_{j=1}^{M'}q'_j = 1$.
Define $c_{ij} = \|\xi_i - \xi'_j\|^2 \ \forall \; i = 1, \ldots, M \  j = 1, \ldots, M'$.  
Then, the $2$-Wasserstein distance between $\QQ$ and $\QQ'$ equals the square root
of the optimal value of the following transportation planning problem:
\begin{equation}
\begin{array}{ll}
\min \limits_{\pi}& \sum_{i=1}^M \sum_{j=1}^{M'} c_{ij} \pi_{ij} \\
\st    & \sum_{j=1}^{M'} \pi_{ij} = q_i \ \forall \; i = 1, \ldots, M \\
        & \sum_{i=1}^{M} \pi_{ij} = q'_j \ \forall \; j = 1, \ldots, M' \\
        & \pi_{ij} \geq 0 \ \forall \; i = 1, \ldots, M, \ j = 1, \ldots, M',
\end{array}
\end{equation}
where $\pi$ is the joint distribution of $\xi$ and $\xi'$ with marginals of $\QQ$ and $\QQ'$
and $\pi$ is the matrix variable in this optimization problem. 

\end{example}

With this setting, our ambiguity set contains a family of 
distributions that are close to $\widehat \PP_N$ with respect to the 
Wasserstein metric. In particular, we define our ambiguity set $\D$ 
as a $2$-Wasserstein ball of radius $\epsilon$ that is centered at the 
uniform empirical distribution $\widehat \PP_N$:
\[
\D(\widehat \PP_N, \epsilon) := \left\{ \QQ \in \M^2(\Xi) :  W^2(\QQ, \widehat \PP_N) \le \epsilon \right\}.
\]
The reader is referred to \cite{Hanasusanto.Kuhn.2016} for 
the general case of $\M^r(\Xi)$ and $W^r(\QQ, \QQ')$ for any
$r \geq 1$. We use the $2$-Wasserstein distance in this paper for  
two reasons. First, the Euclidean distance is one of 
the most popular distances considered in the relevant literature;
see \cite{Esfahani.Khun.2015, Hanasusanto.Kuhn.2016}. 
Second, we will find that problem (\ref{equ:vdp}) with an ambiguity 
set based on the $2$-Wasserstein distance can be reformulated into 
a copositive program; see Section \ref{sec:copositive}. 

Then, we replace the generic ambiguity set $\D$ with the Wasserstein 
ball $\D(\hP_N, \epsilon)$ in problem (\ref{equ:vdp}) to compute
a data-driven upper bound: 
\begin{equation} \label{equ:bound}
\begin{array}{lll} v_{\D(\hP_N, \epsilon)}^+ = &  \sup \limits_{\Pi, \ \QQ \in \M^2(\Xi)}  &  \int_{\Xi}  v(\xi) \ d\,\QQ(\xi)  \\ 
                                           &  \st &  \int_{\Xi^2} \|\xi - \xi'\|^2 \ \Pi(d\xi, d\xi') \le \epsilon^2 \\
                                           & & \text{$\Pi$ is a joint distribution of $\xi$ and $\xi'$} \\
                                           & &  \text{with marginals $\QQ$ and $\widehat \PP_N$, respectively}.              
\end{array}
\end{equation}

We next close this subsection by making some remarks.
First, the Wasserstein ball radius in problem (\ref{equ:bound})
controls the conservatism of the optimal value. A larger radius 
is more likely to contain the true distribution and thus a more likely 
valid upper bound on $v_{\PP}$, but even if it is valid, it could be a 
weaker upper bound. Therefore, it is crucial to choose an 
appropriate radius for the Wasserstein ball. Second, the 
Kullback-Leibler divergence ball is also considered in recent research; 
see \cite{Hu.Hong.2013, Jiang.Guan.2015}. However, in the case of
our discrete empirical distribution, the Kullback-Leibler divergence ball is a 
singleton containing only the empirical distribution itself, with probability 
one. Third, the ambiguity sets constructed by goodness-of-fit 
tests in \cite{Bertsimas.Gupta.Kallus.2013, Bertsimas.Gupta.Kallus.2014}
also possess statistical guarantees, however, they often lead to
complicated and intractable optimization problems for the case of 
high-dimensional uncertain parameters.


\subsection{An empirical Wasserstein radius}
The papers \cite{Eichfelder.Jahn.2008, Esfahani.Khun.2015} 
present a theoretical radius $\epsilon_N(\beta)$ for 
datasets of size $N$, which guarantees a desired confidence 
level $1 - \beta$ for $\PP \in \D(\PP_N, \epsilon_N(\beta))$ 
under the following standard assumption on $\PP$:
\begin{assumption}[Light-tailed distribution] There exists an
exponent $a > 1$ such that 
\[
\Ep_{\PP} [\exp(\|\xi\|^a)] = \int_{\Xi} \exp(\|\xi\|^a)d\,\PP(\xi) <  \infty.
\]
\end{assumption}
\noindent Note that $\epsilon_N(\beta)$ depends on $N$ 
and $\beta$. However, $\epsilon_N(\beta)$ is known to be 
conservative in practice; see \cite{Eichfelder.Jahn.2008} for 
example. In other words, $\D(\PP_N, \epsilon_N(\beta))$ 
might contain significantly more irrelevant distributions so 
that the computed $v_{\D(\PP_N, \; \epsilon_N(\beta))}^+$ 
is significantly larger than $v_{\PP}$. So, we propose a 
procedure to derive an empirical radius that provides 
a desired confidence level $1 - \beta$ but is much smaller 
than $\epsilon_N(\beta)$. Our approach is based on 
the data set $\widehat \Theta_N$. In particular, we 
apply a procedure, similar to cross validation in spirit, 
that computes an empirical confidence level (between 
$0$ and $1$) for a given radius $\epsilon$; see details 
in the next paragraphs. Our procedure guarantees that 
a larger radius leads to a higher confidence level. 
Therefore, by iteratively testing different $\epsilon$, 
we can find a radius with a desired confidence level based 
on the data set $\widehat \Theta_N$. Although the derived 
$\epsilon({\widehat \Theta}_N, \, \beta)$ depends on the 
data set $\widehat \Theta_N$, our experimental results in 
Section \ref{sec:example} indicate that it can be used 
for other datasets of  the same sample size. We will 
show the numerical evidence in Section \ref{sec:example}. 
Our approach is also similar in spirit to the one used 
in \cite{Esfahani.Khun.2015, Hanasusanto.Kuhn.2016}.

Our procedure requires an oracle to compute (or approximate) 
$v_{\DPe}^+$. Later in Section \ref{sec:copositive}, we will propose 
a specific approximation; see (\ref{equ:vdp_sdp}). Assume also that, 
in addition to the dataset ${\widehat \Theta}_N$, we predetermine a 
set $\mathcal E$ containing a large, 
yet finite, number of candidate radii $\epsilon$. We randomly divide 
${\widehat \Theta}_N$ into training and validation datasets $K$ times.  
We enforce the same dataset size denoted by $N_T$ on each of the 
$K$ training datasets. 

Next, for each $\epsilon \in \mathcal E$, we derive an empirical 
probability based on the following procedure: (i) we use each of 
the $K$ training datasets to approximate 
$v_{\D({\widehat \PP}_{N_T}, \; \epsilon)}^+$ with a value called 
$v_{\text{WB}}(\epsilon)$ by calling the oracle, where $\PP_{N_T}$
represents the empirical distribution from the training set; (ii) we then use 
the corresponding $K$ validation datasets to simulate the expected 
optimal values denoted by $v_{\text{SB}}$ \footnote{This process 
is to solve a linear program or integer program corresponding to 
each sample in the validation dataset and then to take the average 
of the optimal values.}; and (iii) we finally compute 
the percentage of the $K$ instances where $v_{\text{WB}}(\epsilon) 
\ge v_{\text{SB}}$. Let us call this empirical probability as the 
{\em empirical confidence level}. Thus, the empirical confidence level 
can roughly approximate the confidence level that the underlying 
distribution is contained in the Wasserstein-based ambiguity 
set with the radius $\epsilon$. Note that the percentage 
computed is non-decreasing in $\epsilon$ and equal to $1$ for 
some large $\epsilon_0$. Therefore, the set containing all the 
empirical confidence levels is essentially an empirical cumulative distribution. 
Then, given a desired confidence level, we can 
choose a corresponding empirical radius $\epsilon \in \mathcal E$. 
The numerical results in Section \ref{sec:example} indicate that
our choices of $\epsilon$ indeed return the desired confidence levels.  
We specify the above procedure in Algorithm \ref{algorithm:a1}.
\begin{algorithm}
\caption{Procedure to compute an empirical confidence level for any $\epsilon \in {\cal E}$} \label{algorithm:a1}
\begin{algorithmic}
\STATE \textbf{Inputs:} A dataset ${\widehat \Theta}_N= \{\hat \xi^1, \ldots, \hat \xi^N\}$ and a radius $\epsilon \in \mathcal E$
\STATE \textbf{Outputs:} The empirical confidence level 
\FOR{$k=1,\ldots,K$}
\STATE Use the $k^{\text{th}}$ training dataset to compute $v^k_{\text{WB}}(\epsilon)$ 
\STATE Use the $k^{\text{th}}$ validation dataset to simulate $v^k_{\text{SB}}$
\ENDFOR
\STATE Calculate the empirical confidence level for $\epsilon$ as the percentage of 
the $K$ instances where $v^k_{\text{WB}}(\epsilon) \ge v^k_{\text{SB}}$
\end{algorithmic}
\end{algorithm}

\section{Problem Reformulation and Tractable Bound} \label{sec:copositive}

In this section, we propose a copositive programming reformulation
for problem (\ref{equ:bound}) under some mild assumptions. 
As copositive programs are computationally intractable, we then propose
semidefinite-based relaxations for the purposes of computation. 

Let us first define the feasible set for $x \in \RR^n$  in (\ref{equ:mip}) 
as follows:  
\[
\X := \left\{ x \in \RR^n :  \begin{array}{l} Ax = b, \
x \ge 0 \\ x_j \in \{0,1\} \ \forall \, j \in \B  \end{array}  \right\}.
\]
We now introduce the following standard assumptions: 

\begin{assumption} \label{ass:feasible_bound}
The set $\X \subseteq \RR^n$ is nonempty and bounded. 
\end{assumption}

\begin{assumption} \label{ass:binary}
$Ax = b, \ x \ge 0 \implies 0 \le x_j \le 1 \ \forall \, j \in \B$. 
\end{assumption}
Assumption \ref{ass:binary} can be easily enforced. 
For example (see also \cite{Burer.2009}, \cite{Natarajan.Teo.Zheng.2011}), 
if $\B = \emptyset$, then the assumption is redundant; if problem (\ref{equ:mip})
is derived from the network flow problems, for instance
the longest path problem on a directed acyclic graph, then
Assumption \ref{ass:binary} is implied from the network
flow constraints; if $\B$ is a nonempty set and the 
assumption is not implied by the constraints, we can add
constraints $x_j + s_j = 1, \ s_j \ge 0 \ \forall \, j \in \B$.

\begin{assumption} \label{ass:convex}
The support set $\Xi \subseteq \RR^k$ is convex, closed, 
and computationally tractable.
\end{assumption}

\noindent For example, $\Xi$􏰐 could be represented using a polynomial 
number of linear, second-order-cone, and semidefinite inequalities. 
In particular, the set $\Xi$ possesses a polynomial-time separation 
oracle \cite{Grotschel.Lovasz.Shrijver.1981}.

\begin{assumption}  \label{ass:uncertainty_bound}
$\Xi$ is bounded. 
\end{assumption}

By Assumption \ref{ass:feasible_bound}, we know that
$v(\xi)$ is finite and attainable for any $\xi \in \Xi$.
Note that under Assumptions 
\ref{ass:feasible_bound}-\ref{ass:uncertainty_bound}, 
$v_{\D}^+$ is finite and attainable and thus we can 
replace $\sup$ with $\max$ in (\ref{equ:bound}) under 
these conditions. Assumption \ref{ass:uncertainty_bound} 
could be merged with Assumption \ref{ass:convex}, but 
it is stated separately to highlight its role in proving the 
exactness of the copositive programming reformulation 
below.

\subsection{A copositive reformulation} \label{sec:ubd}

We reformulate problem (\ref{equ:bound}) via conic programming duality
theory and probability theory. We introduce a useful result from the literature
as follows.
\begin{lemma} \label{lem:moment}
$v_{\D(\hP_N, \epsilon)}^+$ equals the optimal value of
\begin{equation} \label{equ:moment}
\begin{array}{ll}   \sup  & \frac1N  \sum_{i=1}^N \int_{\Xi}  v(\xi) \ d\,\QQ_i(\xi)  \\ 
                            \st &  \frac1N \sum_{i=1}^N \int_{\Xi} \|\xi - \hat \xi_i\|^2 \ d\,\QQ_i(\xi) \le \epsilon^2 \\
                                 & \QQ_i \in {\cal M}^2(\Xi) \ \ \ \forall \, i \in [N],    
\end{array}
\end{equation}
where $\QQ_i$ represents the distribution of $\xi$ conditional on $\xi' = \hat \xi^i$ for all $i \in [N]$.  
\end{lemma}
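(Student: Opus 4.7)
The plan is to exploit the discrete structure of $\widehat \PP_N = \frac{1}{N}\sum_{i=1}^N \delta_{\hat \xi^i}$ in order to disintegrate the joint distribution $\Pi$ appearing in (\ref{equ:bound}) into $N$ conditional distributions indexed by the sample points. Because the $\xi'$-marginal of $\Pi$ is discrete, conditioning is elementary: for each $i \in [N]$ we simply let $\QQ_i$ be the (normalized) restriction of $\Pi$ to the slice $\{\xi' = \hat \xi^i\}$. Then $\Pi$ decomposes as $\Pi(d\xi, d\xi') = \frac{1}{N}\sum_{i=1}^N \QQ_i(d\xi)\,\delta_{\hat \xi^i}(d\xi')$, and the corresponding $\xi$-marginal is $\QQ = \frac{1}{N}\sum_{i=1}^N \QQ_i$.

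Starting from (\ref{equ:bound}), I would first substitute this decomposition into the objective to obtain
\[
\int_{\Xi} v(\xi)\,d\QQ(\xi) = \frac{1}{N}\sum_{i=1}^N \int_{\Xi} v(\xi)\,d\QQ_i(\xi),
\]
and then substitute it into the Wasserstein constraint to get
\[
\int_{\Xi^2} \|\xi-\xi'\|^2\,\Pi(d\xi,d\xi') = \frac{1}{N}\sum_{i=1}^N \int_{\Xi} \|\xi - \hat \xi^i\|^2\,d\QQ_i(\xi) \le \epsilon^2.
\]
These two identities show that every feasible $(\Pi,\QQ)$ for (\ref{equ:bound}) induces a feasible family $\{\QQ_i\}_{i=1}^N$ for (\ref{equ:moment}) achieving the same objective, and in particular each $\QQ_i$ lies in $\M^2(\Xi)$ because $\Pi$ has finite second moment with respect to the reference point $\hat \xi^i$.

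For the converse direction, given any $\{\QQ_i\}_{i=1}^N$ feasible in (\ref{equ:moment}), I would explicitly construct $\Pi := \frac{1}{N}\sum_{i=1}^N \QQ_i \otimes \delta_{\hat \xi^i}$ and $\QQ := \frac{1}{N}\sum_{i=1}^N \QQ_i$, verify that the marginals of $\Pi$ are $\QQ$ and $\widehat \PP_N$, and check via the same identities above that the Wasserstein constraint is met with the same objective value. This two-sided correspondence gives equality of the two optimal values.

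The main subtlety, rather than a real obstacle, is measure-theoretic: one should argue that the disintegration of $\Pi$ along its discrete marginal produces well-defined probability measures $\QQ_i$ on $\Xi$ and that all integrals above commute with the finite sum. Since $\widehat \PP_N$ is purely atomic with $N$ atoms of equal mass, this disintegration is completely elementary and avoids any regularity assumption, so the argument reduces to bookkeeping once the decomposition of $\Pi$ is in place. I would also note in passing that the value $v(\xi)$ is measurable and bounded on $\Xi$ by Assumptions \ref{ass:feasible_bound} and \ref{ass:uncertainty_bound}, so every integral in sight is finite and the supremum is attained.
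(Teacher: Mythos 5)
Your proposal is correct and follows essentially the same route as the paper's proof, which likewise decomposes $\Pi$ as $\frac1N\sum_{i\in[N]}\QQ_i$ via the law of total probability applied to the discrete marginal $\widehat\PP_N$. You simply spell out both directions of the correspondence and the measure-theoretic bookkeeping that the paper leaves implicit.
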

\begin{proof}
As $\QQ_i$ represents the distribution of $\xi$ conditional on 
$\xi' = \hat \xi^i$, the joint probability $\Pi$ in problem (\ref{equ:bound}) 
can be decomposed as $\Pi = \frac1N \sum \limits_{i \in [N]} \QQ_i$ 
by the law of total probability. Thus, the optimal value of (\ref{equ:moment})
coincides with $v_{\D(\hP_N, \epsilon)}^+$, which completes the proof.
 \end{proof}

We next provide a copositive programming reformulation
for problem (\ref{equ:moment}). As the first step, we use
a standard duality argument to write the dual of (\ref{equ:moment})
(see also \cite{Esfahani.Khun.2015}):
\begin{align}
  v_{\D(\hP_N, \epsilon)}^+ = & \sup \limits_{\QQ_i \in \MM^2(\Xi)} \inf \limits_{\lambda \ge 0} \frac{1}{N}\sum_{i=1}^N \int_{\Xi} v(\xi) \ d\,\QQ_i(\xi) + \lambda \left (\epsilon^2 - \frac{1}{N}\sum_{i=1}^N \int_{\Xi}  \|\xi - \hat \xi_i\|^2 \ d\,\QQ_i(\xi) \right ) \\
   \le & \inf \limits_{\lambda \ge 0} \sup \limits_{\QQ_i \in \MM^2(\Xi)} \lambda \, \epsilon^2 + \frac{1}{N}\sum_{i=1}^N \int_{\Xi} (v(\xi) - \lambda \|\xi - \hat \xi_i\|^2) \ d\,\QQ_i(\xi) \label{equ:sub1+} \\
   = & \inf \limits_{\lambda \ge 0} \lambda \, \epsilon^2 + \frac{1}{N}\sum_{i=1}^N \sup \limits_{\xi \in \Xi} \ (v(\xi) - \lambda \|\xi - \hat \xi_i\|^2), \label{equ:sub2+}
\end{align}
where (\ref{equ:sub1+}) follows from the max-min inequality, 
while equation (\ref{equ:sub2+}) follows from the
fact that ${\cal M}^2(\Xi)$ contains all the Dirac distributions supported on $\Xi$. 



By Assumption \ref{ass:feasible_bound}, $v(\xi)$ is finite for all $\xi \in \Xi$.
Then, the inequality in (\ref{equ:sub1+}) becomes an equality for any
$\epsilon > 0$ due to a straightforward generalization of a strong duality 
result for moment problems in Proposition 3.4 in \cite{Shapiro.2001}; see
also Theorem 1 in \cite{Hanasusanto.Kuhn.2016} and Lemma 7 in 
\cite{Hanasusanto.Roitch.Kuhn.Wiesemann.2017}. 
By introducing auxiliary variables $s_i$, the minimization problem in (\ref{equ:sub2+})
is equivalent to 
\begin{equation} \label{equ:vdp_nonconvex}
\begin{array}{lll}
v_{\D(\hP_N, \epsilon)}^+ = & \inf \limits_{\lambda, \, s_i} & \lambda \, \epsilon^2 + \frac1N \sum_{i=1}^{N} s_i \\ 
                & \st &  \sup \limits_{\xi \in \Xi} \ (v(\xi) - \lambda \|\xi - \hat \xi_i\|^2) \le s_i \ \ \ \forall \, i \in [N] \\ 
                & & \lambda \ge 0.
\end{array}
\end{equation}
For each $i \in [N]$, consider the following maximization problem
corresponding to the left-hand side of the constraints in (\ref{equ:vdp_nonconvex}):
\begin{equation} \label{equ:inner_blp}
\begin{array}{lll} 
h^i(\lambda) := & \sup & (F\xi)^Tx - \lambda(\xi^T\xi - 2\hat \xi_i^T\xi + \|\hat \xi_i\|^2) \\
                         & \st    & Ax = b, \ x \ge 0 \\
                         &         & x_j \in \{0, 1\} \ \ \ \forall \, j \in \B \\ 
                         &         &  e_1^T\xi = 1, \ \xi \in \hat \Xi,
\end{array}
\end{equation}
which is a mixed 0-1 bilinear program. Under Assumption \ref{ass:binary}, 
it holds also that the optimal value of (\ref{equ:inner_blp}) equals the optimal value of
an associated copositive program \cite{Burer.2009, Burer.2012}, 
which we now describe. 

Define
\begin{equation} \label{equ:zE}
    z := {\xi \choose x} \in \RR^{k + n}, \quad
    E := \begin{pmatrix} -be_1^T & A \end{pmatrix} \in \RR^{m \times (k + n)},
\end{equation}
\begin{equation} \label{equ:H_lam}
    H^i(\lambda) := \begin{pmatrix} -\lambda (I - \hat \xi_i e_1^T - e_1\hat \xi_i^T + \|\hat \xi_i\|^2e_1e_1^T) & \frac12 F^T \\ \frac12 F & 0 \end{pmatrix}  \in \SYM^{k+n},
\end{equation}
and for any $j \in \B$, define
\begin{equation} \label{equ:zQ}
 Q_j :=  {0 \choose f_j}{0 \choose f_j}^T - \frac12 {0 \choose f_j}{e_1 \choose 0}^T 
              -  \frac12 {e_1 \choose 0} {0 \choose f_j}^T \in \SYM^{k+n}.
\end{equation} 
where $f_j$ denotes the $j$-th standard basis vector in $\RR^n$. 

Because both $\X$ and $\Xi$ are bounded by Assumptions \ref{ass:feasible_bound}
and \ref{ass:uncertainty_bound}, there exists a scalar $r > 0$ such that the constraint
$z^T z = \xi^T \xi + x^T x \le r$ is redundant for (\ref{equ:inner_blp}). 
Furthermore, it is well-known that we can use the following
quadratic constraints to represent the binary variables
in the description of $\X$: 
\[
x_j^2 - x_j = 0 \ \ \ \Leftrightarrow \ \ \ Q_j\bullet zz^T = 0.
\] 
After adding the redundant constraint and representing the
binary variables, we homogenize problem (\ref{equ:inner_blp})
as follows:
\begin{equation} \label{equ:inner_hom}
\begin{array}{ll} 
\max & H^i(\lambda) \bullet zz^T \\
\st    & Ez = 0, \ g_1^Tz = 1 \\
        & I \bullet zz^T \le r  \\ 
        & Q_j \bullet zz^T = 0 \ \ \ \forall \, j \in \B \\ 
        & z \in \widehat \Xi \times \RR_+^n,
\end{array}
\end{equation}
where $g_1 = {e_1 \choose 0} \in \RR^{k+n}$
and $e_1$ denotes the standard basis 
vector in $\RR^k$.  
The copositive representation is thus
\begin{equation} \label{equ:inner_cpp}
\begin{array}{ll} 
\max \;\; & H^i(\lambda) \bullet Z  \\
\st    \;\; &  \diag(EZE^T) = 0  \\
        \;\; &  g_1 g_1^T \bullet Z = 1 \\
        \;\; &  I \bullet Z \le r \\ 
        \;\; & Q_j \bullet Z = 0 \ \ \ \forall \, j \in \B \\ 
        \;\; & Z \in \CP(\widehat \Xi \times \RR_+^n).  
\end{array}
\end{equation}
Letting $u^i \in \RR^m$, $\rho^i \in \RR_+$, $\alpha^i \in \RR$, and
$v^i \in \RR^{|\B|}$ be the respective dual multipliers of
$\diag(EZE^T) = 0$, $I \bullet Z \le r$, $g_1 g_1^T \bullet Z = 1$,
and $Q_j \bullet Z = 0$, standard conic duality
theory implies the dual of (\ref{equ:inner_cpp}) is
\begin{equation} \label{equ:inner_cop}
\begin{array}{ll}
\min \limits_{\alpha^i, \rho^i, u^i, v^i} & \alpha^i + r \rho^i \\
\st &  \alpha^i g_1 g_1^T - H^i(\lambda) + E^T \Diag(u^i)E + \sum \limits_{j \in \B} v^i_jQ_j + \rho^i I \in \COP(\widehat \Xi \times \RR_+^n) \\
& \rho^i \ge 0.
\end{array}
\end{equation}
Holding all other dual variables fixed, for $\rho^i > 0$ large,
the matrix variable in (\ref{equ:inner_cop}) is strictly
copositive---in fact, positive definite---which establishes that
Slater's condition is satisfied, thus ensuring strong duality:
the optimal value of (\ref{equ:inner_cpp}) equals
the optimal value of (\ref{equ:inner_cop}). Therefore,
we can reformulate problem (\ref{equ:vdp_nonconvex})
as follows:
{\small
\begin{equation} \label{equ:vdp_cop}
\begin{array}{lll}
v_{\D(\hP_N, \epsilon)}^+ = & \min & \lambda \epsilon^2 + \frac1N \sum \limits_{i=1}^{N} (\alpha^i + r\rho^i) \\ 
                & \st &  \alpha^i g_1 g_1^T - H^i(\lambda) + E^T \Diag(u^i)E + \sum \limits_{j \in \B} v^i_{j}Q_j + \rho^i I \in \COP(\widehat \Xi \times \RR_+^n) \ \forall \, i \in [N] \\ 
                & & \rho^i \ge 0  \ \ \ \forall \, i \in [N] \\
                & & \lambda \ge 0.
\end{array}
\end{equation}
}

Note that if Assumption \ref{ass:uncertainty_bound} fails, the constraint
$I \bullet Z \le r$ should be excluded from (\ref{equ:inner_cpp})
and thus the terms $r\rho^i$ and $\rho^i I$ in the objective
function and the constraint, respectively, should be excluded 
in (\ref{equ:inner_cop}) as well. As such, strong duality 
between (\ref{equ:inner_cpp}) and (\ref{equ:inner_cop}) cannot be
established in this case. However, the modified (\ref{equ:inner_cop}) still
provides an upper bound on $h^i(\lambda)$.  Accordingly,
the modified problem (\ref{equ:vdp_cop}) still provides an
upper bound on $v_{\D(\hP_N, \epsilon)}^+$.

\subsection{A semidefinite-based relaxation} 

As problem (\ref{equ:vdp_cop}) is difficult to solve in general, we 
propose a tractable approximation based on semidefinite programming
techniques. In particular, we propose an inner approximation
of $\COP(\widehat \Xi \times \RR_+^n)$ in
(\ref{equ:vdp_cop}) so that the resulting problem has an optimal
value that is an upper bound on $v_{\D}^+$. Now, define
\[
    \IA(\widehat \Xi \times \RR_+^n) :=
    \left\{
        S + M
        \ : \
        \begin{array}{c}
         S_{11} \in \IA(\widehat \Xi), \text{Rows}(S_{21}) \in \widehat \Xi^* \\
            S_{22} \ge 0, \ M \succeq 0 
        \end{array}
    \right\},
\]
where $\IA(\widehat \Xi)$ is an inner approximation of $\COP(\widehat \Xi)$, i.e., 
$\IA(\widehat \Xi) \subseteq \COP(\widehat \Xi)$. 
Immediately, we have a relationship between $ \IA(\widehat \Xi \times \RR_+^n)$
and $ \COP(\widehat \Xi \times \RR_+^n)$:
\begin{lemma} \label{lem:inner}
$\IA(\widehat \Xi \times \RR_+^n) \subseteq \COP(\widehat \Xi \times \RR_+^n)$.
\end{lemma}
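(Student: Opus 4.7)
The plan is to unfold the definition of copositivity directly. I fix an arbitrary matrix $S+M$ in $\IA(\widehat \Xi \times \RR_+^n)$ and an arbitrary vector $z = {\xi \choose x}$ with $\xi \in \widehat \Xi$ and $x \in \RR_+^n$, and aim to verify $z^T(S+M)z \ge 0$. The contribution of $M$ is immediate, since $M \succeq 0$ gives $z^T M z \ge 0$ for all $z$, so the real work is to show $z^T S z \ge 0$ for $z$ in the product cone.

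To handle $z^T S z$, I would split $S$ into its four blocks, using that $S = (S+M) - M$ is symmetric and hence $S_{12} = S_{21}^T$, and expand
\[
z^T S z = \xi^T S_{11}\xi + 2(S_{21}\xi)^T x + x^T S_{22} x.
\]
Each summand is then shown to be nonnegative in turn: (i) $\xi^T S_{11} \xi \ge 0$ follows from $S_{11} \in \IA(\widehat \Xi) \subseteq \COP(\widehat \Xi)$ applied to $\xi \in \widehat \Xi$; (ii) $x^T S_{22} x \ge 0$ follows from $S_{22}$ being entrywise nonnegative together with $x \ge 0$, giving $x^T S_{22} x = \sum_{i,j} (S_{22})_{ij} x_i x_j \ge 0$; and (iii) for the cross term, each row of $S_{21}$ lies in $\widehat \Xi^*$ by hypothesis, so $S_{21}\xi$ is coordinate-wise nonnegative whenever $\xi \in \widehat \Xi$, and since $x \ge 0$ the inner product $(S_{21}\xi)^T x$ is nonnegative. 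Combining with $z^T M z \ge 0$ yields $z^T(S+M)z \ge 0$, which is exactly the copositivity condition.

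I do not anticipate a genuine obstacle: the argument is a block-matrix bookkeeping exercise plus three invocations of the respective cone definitions. The one place that warrants care is the cross term, where I must read ``Rows$(S_{21}) \in \widehat \Xi^*$'' as ``each row of $S_{21}$, viewed as an element of $\RR^k$, lies in $\widehat \Xi^*$'' and then invoke the defining property of the dual cone to obtain $S_{21}\xi \ge 0$ componentwise. Once this interpretation is fixed, the remaining steps are immediate from the definitions of $\IA(\widehat \Xi \times \RR_+^n)$ and $\COP(\widehat \Xi \times \RR_+^n)$.
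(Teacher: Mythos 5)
Your proposal is correct and follows essentially the same route as the paper: expand the quadratic form blockwise and verify nonnegativity of the $S_{11}$, cross, $S_{22}$, and $M$ terms using, respectively, $\IA(\widehat \Xi) \subseteq \COP(\widehat \Xi)$, $\text{Rows}(S_{21}) \in \widehat \Xi^*$ with $x \ge 0$, entrywise nonnegativity of $S_{22}$, and $M \succeq 0$. Your explicit note that $S_{21}\xi$ is componentwise nonnegative by the dual-cone definition is exactly the justification the paper uses for the cross term.
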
 

\begin{proof}
Let arbitrary ${p \choose q} \in \widehat \Xi \times \RR_+^n$ be given. We need
to show
\[
    {p \choose q}^T
    \left(
        S + M
    \right)
    {p \choose q} = \textstyle{{p \choose q}}^T S {p \choose q} +
    \textstyle{{p \choose q}}^T M {p \choose q} \ge 0.
\]
\begin{align}
 {p \choose q}^T
    \left(
        S + M
    \right)
    {p \choose q} = & \  \textstyle{{p \choose q}}^T S {p \choose q} +
    \textstyle{{p \choose q}}^T M {p \choose q} \\
   = & \ p^T S_{11}p + 2q^TS_{21}p + q^TS_{22}q +  \textstyle{{p \choose q}}^T M {p \choose q} \\
  \ge & \ 0
\end{align}
The first term is nonnegative because $p \in \widehat \Xi$ and
$S_{11} \in \IA(\widehat \Xi) \subseteq \COP(\widehat \Xi)$; the second
term is nonnegative because $p \in \widehat \Xi, q \ge 0$, and
$\text{Rows}(S_{21}) \in \widehat \Xi^*$; the third term is
nonnegative because $q \ge 0$ and $S_{22} \ge 0$; the
last term is nonnegative because $M \succeq 0$. 
\end{proof}
When $\widehat \Xi = \{\xi \in \RR^k: P\xi \ge 0\}$ is a polyhedral cone based
on some matrix $P \in \RR^{p \times k}$, a typical inner approximation $\IA(\widehat \Xi)$
of $\COP(\widehat \Xi)$ is given by
\[
\IA(\widehat \Xi) := \{ S_{11} = P^T Y P : Y \ge 0 \},
\]
where $Y \in \SYM^p$ is a symmetric matrix variable. This corresponds
to the RLT approach of \cite{Anstreicher.2009, Burer.2015, Sherali.Adams.2013}.
When $\widehat \Xi= \{ \xi \in \RR^k : \|(\xi_2, \ldots, \xi_k)^T\| \le \xi_1 \}$ is
the second-order cone,
it is known \cite{Sturm.Zhang.2003} that
\[
   \COP(\widehat \Xi) = \{ S_{11} = \tau J + M_{11} : \tau \ge 0, \ M_{11} \succeq 0 \},
\]
where $J = \Diag(1, -1, \ldots, -1)$. Because of this simple structure,
it often makes sense to take $\IA(\widehat \Xi) = \COP(\widehat \Xi)$ in practice.

Now consider the following problem by replacing $\COP(\widehat \Xi \times \RR_+^n)$ with
$\IA(\widehat \Xi \times \RR_+^n)$ in (\ref{equ:vdp_cop}). 
{\small
\begin{equation} \label{equ:vdp_sdp}
\begin{array}{lll}
\bar v_{\D(\hP, \epsilon)}^+= & \min & \lambda \epsilon^2 + \frac1N \sum \limits_{i=1}^{N} (\alpha^i + r\rho^i) \\ 
                & \st &  \alpha^i g_1 g_1^T - H^i(\lambda) + E^T \Diag(u^i)E + \sum \limits_{j \in \B} v^i_{j}Q_j + \rho^i I \in \IA(\widehat \Xi \times \RR_+^n) \ \forall \, i \in [N] \\ 
                & & \rho^i \ge 0  \ \ \ \forall \, i \in [N] \\
                & & \lambda \ge 0.
\end{array}
\end{equation}
}
Obviously, we have the following result:
\begin{theorem}
$v_{\D(\hP_N, \epsilon)}^+ \le \bar v_{\D(\hP, \epsilon)}^+$. 
\end{theorem}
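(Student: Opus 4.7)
The plan is to exploit the simple observation that the two programs (\ref{equ:vdp_cop}) and (\ref{equ:vdp_sdp}) have identical objectives and identical constraints except that the copositive-cone membership in (\ref{equ:vdp_cop}) is replaced by the (generally stricter) inner-approximation membership in (\ref{equ:vdp_sdp}). So the feasible region of (\ref{equ:vdp_sdp}) is contained in that of (\ref{equ:vdp_cop}), and since both are minimization problems, the optimal value cannot decrease upon replacement.

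Concretely, I would proceed as follows. First, take any feasible tuple $(\lambda, \{\alpha^i, \rho^i, u^i, v^i\}_{i \in [N]})$ for (\ref{equ:vdp_sdp}). For each $i \in [N]$ the matrix
\[
 \alpha^i g_1 g_1^T - H^i(\lambda) + E^T \Diag(u^i) E + \sum_{j \in \B} v^i_j Q_j + \rho^i I
\]
lies in $\IA(\widehat \Xi \times \RR_+^n)$. By Lemma \ref{lem:inner}, $\IA(\widehat \Xi \times \RR_+^n) \subseteq \COP(\widehat \Xi \times \RR_+^n)$, so this matrix also lies in $\COP(\widehat \Xi \times \RR_+^n)$. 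Together with $\rho^i \ge 0$ and $\lambda \ge 0$, this shows that the same tuple is feasible for (\ref{equ:vdp_cop}) with the identical objective value $\lambda \epsilon^2 + \frac{1}{N}\sum_{i=1}^N (\alpha^i + r\rho^i)$.

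Hence every feasible point of (\ref{equ:vdp_sdp}) is a feasible point of (\ref{equ:vdp_cop}) with the same objective. Taking the infimum over the (smaller) feasible set of (\ref{equ:vdp_sdp}) yields a value no smaller than the infimum over the (larger) feasible set of (\ref{equ:vdp_cop}), i.e.,
\[
 v_{\D(\hP_N, \epsilon)}^+ \;=\; \text{opt}(\ref{equ:vdp_cop}) \;\le\; \text{opt}(\ref{equ:vdp_sdp}) \;=\; \bar v_{\D(\hP, \epsilon)}^+ .
\]

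There is essentially no obstacle here: the entire content of the theorem has already been packed into Lemma \ref{lem:inner} and into the copositive reformulation (\ref{equ:vdp_cop}) established in Section \ref{sec:ubd}. The only small subtlety worth a sentence is the case when Assumption \ref{ass:uncertainty_bound} fails; then (\ref{equ:vdp_cop}) is itself only an upper bound on $v_{\D(\hP_N, \epsilon)}^+$ rather than an equality, but the inclusion $\IA \subseteq \COP$ together with the same feasibility-inheritance argument still gives $v_{\D(\hP_N, \epsilon)}^+ \le \bar v_{\D(\hP, \epsilon)}^+$, so the stated inequality remains valid in both regimes.
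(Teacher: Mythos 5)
Your argument is correct and is exactly the paper's intended reasoning: the paper presents this theorem as an immediate ("obvious") consequence of Lemma \ref{lem:inner}, since replacing $\COP(\widehat \Xi \times \RR_+^n)$ by the smaller set $\IA(\widehat \Xi \times \RR_+^n)$ in the minimization (\ref{equ:vdp_cop}) shrinks the feasible region and hence cannot decrease the optimal value. Your additional remark about the regime where Assumption \ref{ass:uncertainty_bound} fails is a sensible clarification consistent with the discussion at the end of Section \ref{sec:ubd}, but it does not change the substance of the argument.
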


\section{Numerical Experiments} \label{sec:example}

In this section, we validate our proposed Wasserstein-ball
approach (WB) on three applications. We will compare WB
with the moment-based approach (MB) proposed in 
\cite{Natarajan.Teo.Zheng.2011} where the exact values of the 
first two moments of the distributions are known. 
In practice, the moments of the distribution are often not known 
exactly. To this end, Delage and Ye \cite{Delage.Ye.2010} proposed
a data-driven approach to handle this case. However, in this paper, 
we assume that the moments are known exactly for MB. Actually, this 
choice favors MB, but the goal of our experiments is to demonstrate that 
our approach provides a valid upper bound that gets closer to $v_{\PP}$ as
the size of the data set increases, while the MB provides an upper bound,
which does not improve with the size of the data set.

All computations are conducted with Mosek version 8.0.0.28 beta \cite{mosek} on 
an Intel Core i3 2.93 GHz Windows computer with 4GB of RAM and are implemented 
using the modeling language YALMIP \cite{lofberg2004yalmip} in MATLAB  (R2014a) 
version 8.3.0.532. In order to demonstrate the effectiveness of WB, we also implement
a Monte Carlo simulation-based approach (SB) which requires a sufficiently large
number of randomly generated samples. For the project management problem in
Section \ref{sec:pmp}, a linear program is solved for each sample of the Monte Carlo 
simulation, while for the knapsack problem in Section \ref{sec:kp}, an integer
program is solved for each sample. We employ CPLEX 12.4 to solve these 
linear programs and integer programs.

\subsection{Statistical sensitivity analysis of highest-order statistic}  \label{sec:ssa}

The problem of finding the maximum value from a set
$\zeta = (\zeta_1, \ldots, \zeta_n)$ of $n$ numbers can be formulated 
as the optimization problem:
\begin{equation} \label{equ:ssa}
\max \left\{ \zeta^Tx : e^T x = 1, \ x \ge 0\right\}.
\end{equation}
For example, suppose $\zeta_1 = \max \{\zeta_1, \ldots, \zeta_n \}$, 
then the optimal solution to (\ref{equ:ssa}) is $x_1^* = 1, x_2^* =
\cdots = x_n^* = 0$. For the statistical sensitivity analysis problem,
we consider a random vector $\zeta$ following a joint distribution $\PP$. 
In the situation where the true distribution is not known exactly,
our focus is to investigate the upper bound on the expected
maximum value over an ambiguity set containing distributions
that possess partial shared information. 

We consider an instance with $n = 3$ and the true distribution
$\PP$ of $\zeta$ is assumed to be jointly lognormal with first and 
second moments given by $\mu_{\text{log}} \in \RR^3$ and 
$\Sigma_{\text{log}} \in \SYM^3$, respectively. 

In our experiments, we use the following procedure to randomly
generate $\mu_{\text{log}}$ and $\Sigma_{\text{log}}$. We first sample 
$\mu \in \RR^3$ from a uniform distribution $[0, 2]^3$. Then, we 
randomly generate a matrix $\Sigma \in \SYM^3$ as follows:
we set the vector of standard deviations to $\sigma = \tfrac14 e \in \RR^3$,
sample a random correlation matrix $C \in \SYM^3$ using the MATLAB
command `gallery(`randcorr',3)', and set $\Sigma = \diag(\sigma)C\diag(\sigma)+\mu\mu^T$. 
We set $\mu$ and $\Sigma$ as the first and second moments respectively
of the corresponding normal distribution of $\PP$. Then $\mu_{\text{log}}$ 
and $\Sigma_{\text{log}}$ can be computed based on the following 
formulae \cite{Halliwell.2015}:
\begin{equation} \label{equ:moments}
\begin{array}{l}
(\mu_{\text{log}})_i = e^{\mu_i + 0.5\Sigma_{ii}}, \\
(\Sigma_{\text{log}})_{ij} = e^{\mu_i + \mu_j + 0.5(\Sigma_{ii} + \Sigma_{jj})}(e^{\Sigma_{ij}}-1).
\end{array}
\end{equation}

We can cast this problem into our framework by setting
$m = 1, k = n+1$, $\xi = (1, \zeta_1, \ldots, \zeta_n)$,
$F = (0, I)$, and $\B = \emptyset$. Obviously, Assumptions
\ref{ass:feasible_bound} and \ref{ass:convex} are satisfied. 
Assumption \ref{ass:binary} is vacuous. Although Assumption 
\ref{ass:uncertainty_bound} does
not hold, problem (\ref{equ:vdp_sdp}) can still provide
a valid upper bound on the expected optimal value as 
discussed in Section \ref{sec:ubd}.

\subsubsection{The deviation of empirical Wasserstein radii}

In this experiment, we consider a particular underlying
distribution $\PP$ that is generated by the procedure mentioned
above. Also, we consider eight cases for the size
of the dataset: $N \in \{10$, $20$, $40$, $80$, $160$, $320$, $640$,
$1280 \}$. For each case, we randomly generate a dataset ${\widehat \Theta}_N$ 
containing $N$ independent samples
from $\PP$ and use the procedure in Section \ref{sec:wass}
to determine a desired radius from a pre-specified set 
$\mathcal E = \{0.001$, $0.005$, $0.01$, $0.05$, $0.1$, $0.2$, 
$0.3$, $0.4$, $0.5$, $0.6$, $0.7$, $0.8$, $0.9$, $1.0$, $2.0\}$\footnote{
From preliminary experiments, the largest element $2.0$ in 
set $\cal E$ returned 1 as the empirical confidence level for all the 
experiments we conducted. Thus, we believe it is sufficient to have
$2.0$ as the largest element here.}. 
In particular, we set $K = 100$ in Algorithm \ref{algorithm:a1}.  
Figure \ref{Fig:epsilon} shows the trend of the reliabilities over different
Wasserstein radii for $N \in \{ 20$, $80$, $320$, $1280 \}$. Clearly, 
smaller Wasserstein radii tend to have lower empirical confidence 
levels. Furthermore, as the sample size increases, the empirical confidence
level increases as well for the same Wasserstein radius. The result of this
experiment indicates that we can practically choose a Wasserstein
radius with a desired statistical guarantee for each case of $N$. 
We remark that the derived radii can be used for datasets of
the same sizes generated from different distributions of the same family
(lognormal distributions in this application).


\begin{figure}[ht]
  \centering
  \includegraphics[width=5.0in]{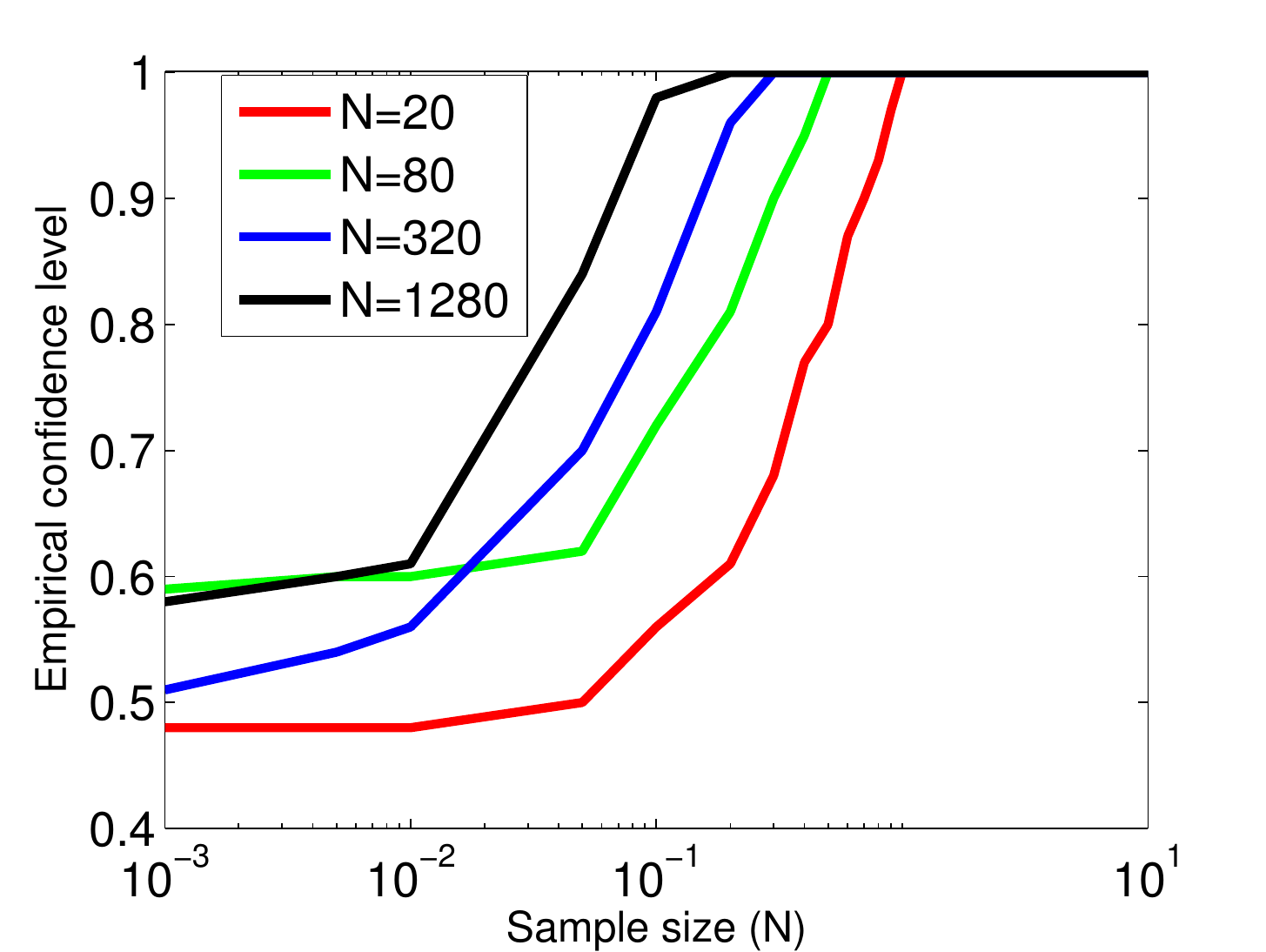}
  \caption{Empirical confidence levels of different Wasserstein radii for $N \in \{20, 80, 320$, $1280 \}$ respectively.
}
  \label{Fig:epsilon}
\end{figure}

\subsubsection{Instances with the same underlying distribution} \label{sec:ssa_single}

Our next experiment is to focus on a particular joint lognormal
distribution $\PP$. We consider eight cases: $N \in \{ 10$, $20$, $40$, $80$, $160$, $320$,
$640$, $1280 \}$. For each case, we test $100$ trials and in each
trial we randomly generate $N$ independent samples from $\PP$ 
and choose the Wasserstein radius with an empirical confidence level of $0.90$.
We compare our approach with MB where the first
two moments are directly given as $\mu_{\text{log}}$ and $\Sigma_{\text{log}}$. 
We also randomly generate $100000$ independent
samples from $\PP$ to simulate the true expected optimal value.

We demonstrate experimental results in Figure  \ref{Fig:single_dist}. 
Note that the solid black line represents the simulated value of
the true expected optimal value, while the dashed black line 
represents the upper bound calculated by the moment-based 
approach. Furthermore, we solve an instance of (\ref{equ:vdp_sdp}) 
for each of the $100$ trails in each case of $N$. We use the blue, red,
and green lines to respectively represent the $80^{th}$ quantile,
the median, and the $20^{th}$ quantile of the values from the 
$100$ trials in each case. Figure \ref{Fig:single_dist} shows that our approach provides
weaker bounds on the expected optimal value for smaller sample
sizes. However, as the size of samples increases, our approach
provides stronger bounds and the bounds get relatively close to the
simulated value. In contrast, the value from MB remains the same 
regardless of sample size. 

\begin{figure}[ht]
  \centering
  \includegraphics[width=5.0in]{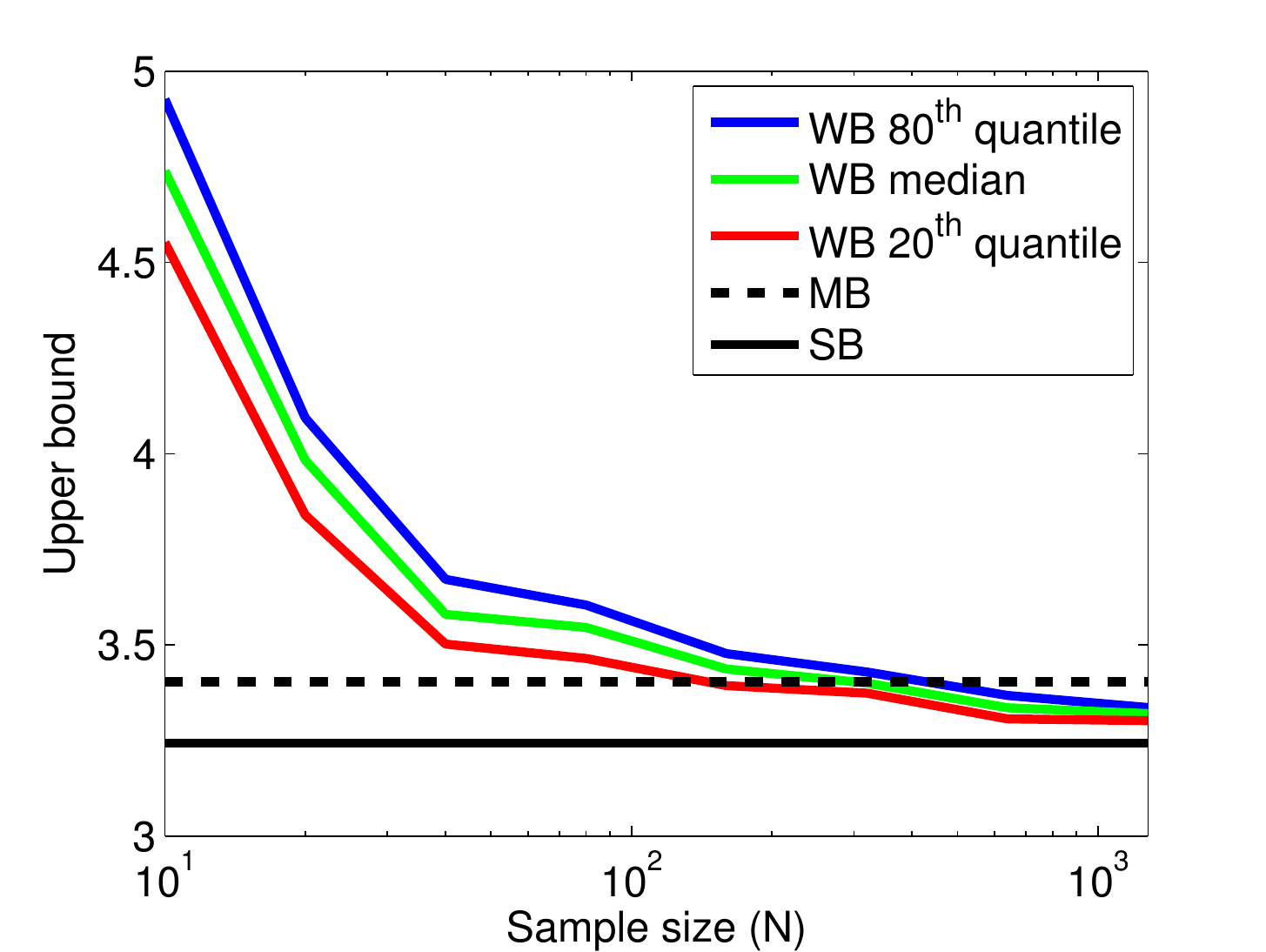}
  \caption{The comparison of WB and MB for the stochastic sensitivity analysis problem over different
sample sizes for a particular randomly generated underlying distribution. 
}
  \label{Fig:single_dist}
\end{figure}

\subsubsection{Instances with different underlying distributions}

In this experiment, we consider eight cases $N \in \{ 10$, $20$, $40$, 
$80$, $160$, $320$, $640$, $1280 \}$. In each case, we randomly 
generate $100$ trials. For each trial in each case we generate $N$
samples from a random lognormal distribution whose first and second 
moments are generated by using the procedure at the beginning of
this section. For each trial in each case, we solve an instance of 
(\ref{equ:vdp_sdp}) with a Wasserstein radius corresponding to 
an empirical confidence level of $0.90$. We also simulate the true expected 
optimal values by randomly generating $100000$ samples from the
true distributions.
 
For each trial in each case, we denote the optimal value from (\ref{equ:vdp_sdp}) 
by $\bar v_{\text{WB}}^+$ and the simulated value by $v_{SB}$. 
Then, we calculate the relative gap between WB and SB as 
\[
\text{gap(WB)} := \dfrac{\bar v_{\text{WB}}^+ - v_{\text{SB}}}{v_{\text{SB}}}.
\] 
We take the average of the relative gaps over the $100$ trials for each case. 
Then, for each trial in each case, we solve MB with the first two
moments computed by (\ref{equ:moments}). Denote the optimal value
from MB by $\bar v_{\text{MB}}^+$. Similarly, we calculate the relative gap 
between MB  and SB as
\[
\text{gap(MB)} := \dfrac{\bar v_{\text{MB}}^+ - v_{\text{SB}}}{v_{\text{SB}}}.
\] 
We then take the average of the relative gaps over the $100$ trials in each case. 
Figure \ref{Fig:multi_dist} illustrates the average relative gaps from both WB and
MB over the eight cases. Clearly, the upper bound from WB approaches the simulated
value along with the increase of the size of samples, while the average relative gap 
between the bound from MB and the simulated value does not. 
\begin{figure}[ht]
  \centering
  \includegraphics[width=5.0in]{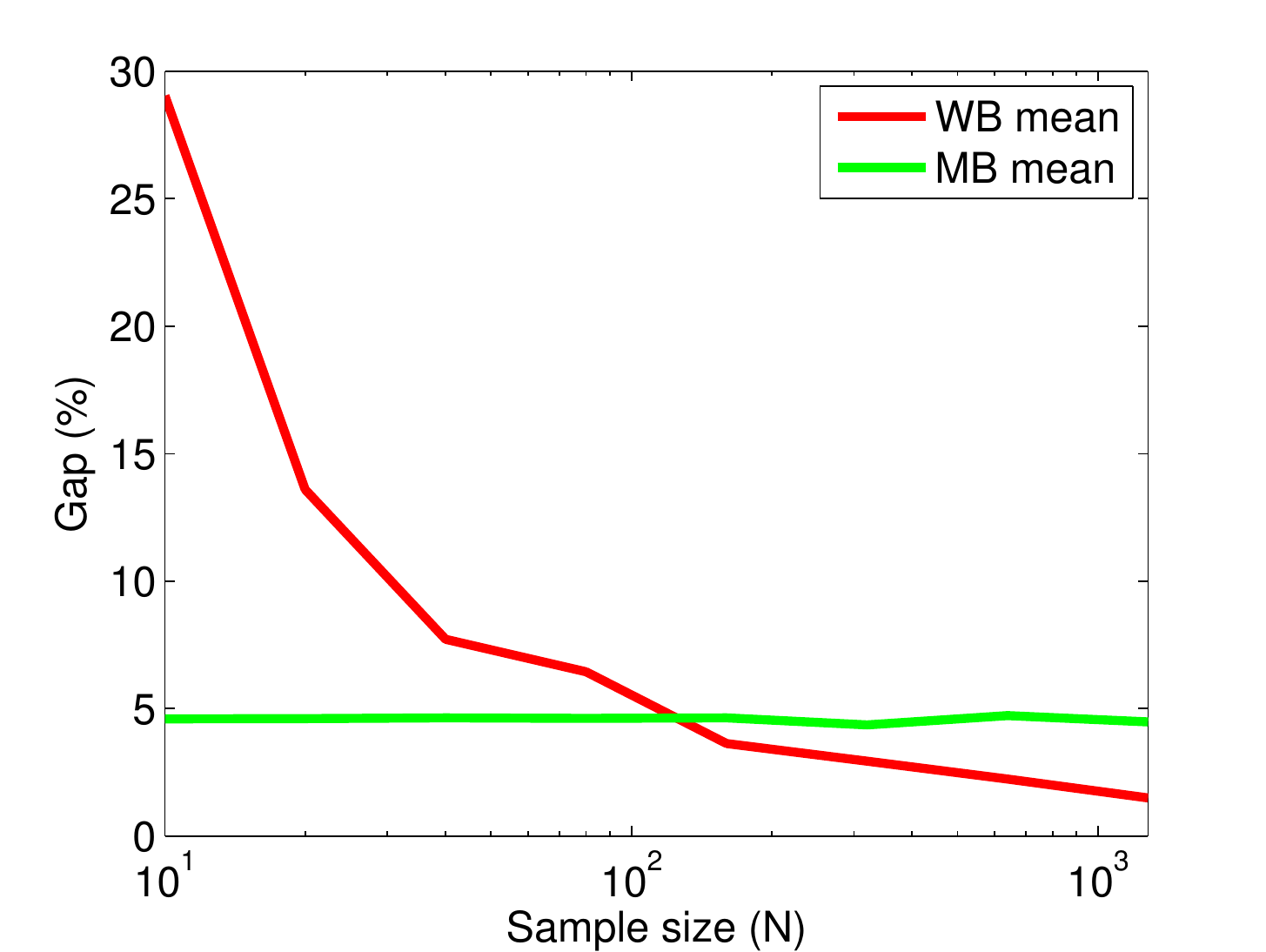}
  \caption{The average gaps from MB and WB for the stochastic sensitivity analysis problem 
over the eight cases: $N \in \{ 10$, $20$, $40$, $80$, $160$, $320$, $640$, $1280 \}$.
The blue line represents the average relative gap between the optimal value from WB and 
the simulated value; the red line represents the average relative gap between the optimal value from MB and 
the simulated value.}
  \label{Fig:multi_dist}
\end{figure} 

Table \ref{Tab:real_reliability} shows 
the percentage of the $100$ trials where the optimal values
from WB are greater than or equal to the corresponding simulated optimal values in the eight cases.
The result demonstrates that the derived empirical Wasserstein radii indeed
provide desired statistical guarantees in practice. 
\begin{table}[tbp]
\centering
\begin{tabular}{c|cccccccc}  
Case number & 1 & 2 & 3 & 4 & 5 & 6 & 7 & 8   \\ \hline
Empirical confidence level & 1.00 & 1.00 & 1.00 & 1.00 & 0.97 & 0.98 & 0.97 & 0.98  
\end{tabular}
\caption{The percentage of the $100$ trials where the optimal 
values from WB are greater or equal to the simulated values over the 
$8$ cases for the stochastic sensitivity analysis problem.}
\label{Tab:real_reliability}
\end{table}


\subsection{Project management problem} \label{sec:pmp}

In this application, we consider a project management problem,
which can be formulated as a longest-path problem on a directed
acyclic graph. The arcs denote activities and the nodes denote the
completions of a set of activities. Arc lengths denote the time
to complete the activities. Thus, the longest path from the starting
node $s$ to the ending node $t$ gives the time needed to compete
the whole project. Let $\zeta_{ij}$ be the length (time) of arc (activity)
from node $i$ to node $j$. The problem can be solved as a linear
program due to the network flow structure as follows:

\begin{equation}
\begin{array}{ll} 
\max & \sum \limits_{(i,j) \in {\cal A}} \zeta_{ij} x_{ij} \\
\st     & \sum \limits_{i : (i,j) \in {\cal A}} x_{ij} - \sum \limits_{j : (i,j) \in {\cal A}} x_{ji} = 
\left\{ 
\begin{array}{l}
 1, \ \ \ \ \text{if $i = s$} \\
 0, \ \ \ \ \text{if $i \in {\cal N}$, and $i \neq s,t$} \\
-1, \ \ \ \ \text{if $i = t$}  
\end{array}
\right.  \\
& x_{ij} \ge 0, \ \forall \; (i,j) \in {\cal A},
\end{array}
\end{equation}
where ${\cal A}$ denotes the set containing all the arcs, ${\cal N}$
denotes the set containing all nodes on the network, and $x_{ij}$ denotes
the number of units of flow sent from node $i$ to node $j$ through arc 
$(i,j) \in {\cal A}$. For the stochastic project
management problem, the activity times are random. In such cases,
due to the resource allocation and management constraints, the project manager
would like to quantify the worst-case expected completion time of the project,
which is corresponding to the worst-case longest path of the network. 

We consider an instance with a network structure shown in Figure \ref{Fig:network}.
\begin{figure}[ht]
  \centering
  \includegraphics[width=3.0in]{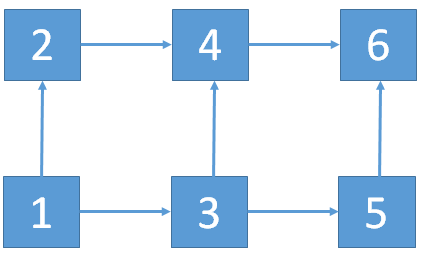}
  \caption{The structure of the project network where $1$ and $6$ 
                are the starting and ending nodes respectively.}
  \label{Fig:network}
\end{figure} 
This network consists of $7$ arcs and $6$ nodes. There
are $3$ paths from the starting node to the ending node on the network.
In the experiments of this example, we consider truncated joint normal
distributions. We use the following procedure to generate a truncated joint
normal distribution $\PP$: denoting $|\cal A|$ by the cardinality of set $\cal A$, 
we generate $\zeta \ge 0$ from a jointly normal
distribution with first and second moments given by $\mu \in \RR^{|{\cal A}|}$
and $\Sigma \in \SYM^{|{\cal A}|}$, respectively. Specifically, we sample
$\mu$ from a uniform distribution $[0, 5]^{|{\cal A}|}$ while the matrix
$\Sigma$ is generated randomly using the following procedure: we set the
vector of standard deviations to $\sigma = e$, sample a random correlation
matrix $C \in \SYM^{|{\cal A}|}$ using the MATLAB command
`gallery(`randcorr',$|{\cal A}|$)', and set $\Sigma = \diag(\sigma)C\diag(\sigma)+\mu\mu^T$. 
Skipping the details, we can cast the network flow problem
into our framework. It is straightforward to check that Assumptions 
\ref{ass:feasible_bound}, \ref{ass:convex}, and \ref{ass:uncertainty_bound}
are satisfied and Assumption \ref{ass:binary} is vacuous. 

\subsubsection{Instances with the same underlying distribution}

The first experiment of this example focuses on a particular underlying distribution
$\PP$. We consider seven cases: $N \in \{10$, $20$, $40$, $80$, $160$, $320$,
$640\}$. For each case, we run $100$ trials and in each trial we randomly
generate a dataset $\widehat \Theta_N$ containing $N$ independent samples from $\PP$. 
We use the procedure in Section \ref{sec:wass} to
compute an empirical confidence level set for each case. Then, we use computed
empirical confidence level sets to derive empirical Wasserstein radii for the following computations.  
For each trial in each case, we solve an instance of (\ref{equ:vdp_sdp})
with a Wasserstein radius corresponding to an empirical confidence level of $0.90$. 
We compare WB with MB where the first two moments are approximated
by using the sample mean and variance from $100000$ samples.
The computed moments are close to their theoretical counterparts as the 
sample size is considerably large. We also simulate the expected optimal 
value over the $100000$ samples. Figure \ref{Fig:single_dist_pnp}
shows that WB provides weaker bounds on the expected optimal value for 
smaller sample sizes. However, as the size of samples increases, WB provides
stronger bounds and the bounds get relatively close to the
simulated value. In contrast, the bounds from MB remains the same 
regardless of the change of sample sizes. 

\begin{figure}[ht]
  \centering
  \includegraphics[width=5.0in]{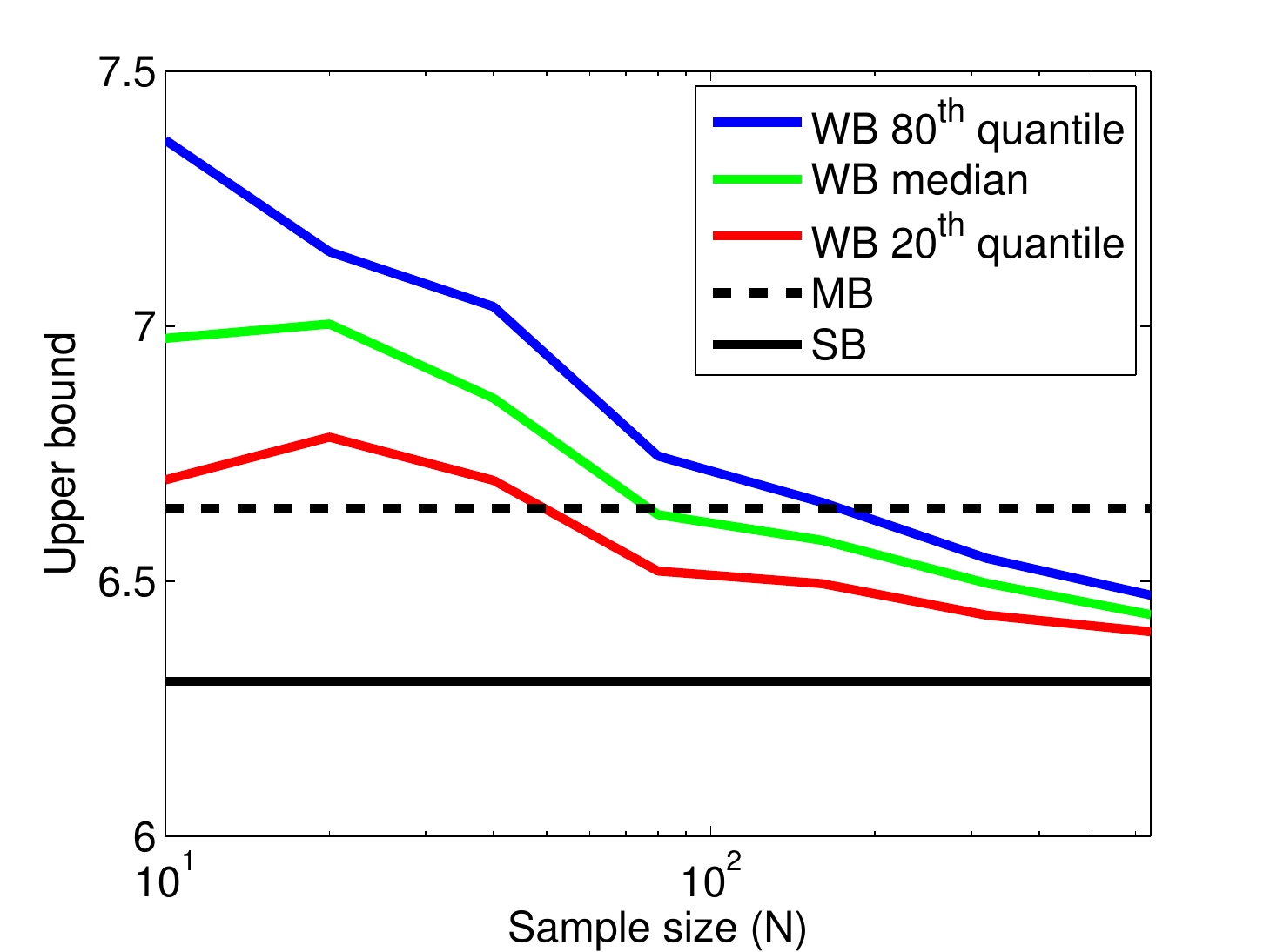}
  \caption{The comparison of WB and MB  for the project management problem 
over different sample sizes for a particular randomly generated underlying distribution. 
}
  \label{Fig:single_dist_pnp}
\end{figure}

\begin{figure}[ht]
  \centering
  \includegraphics[width=5.0in]{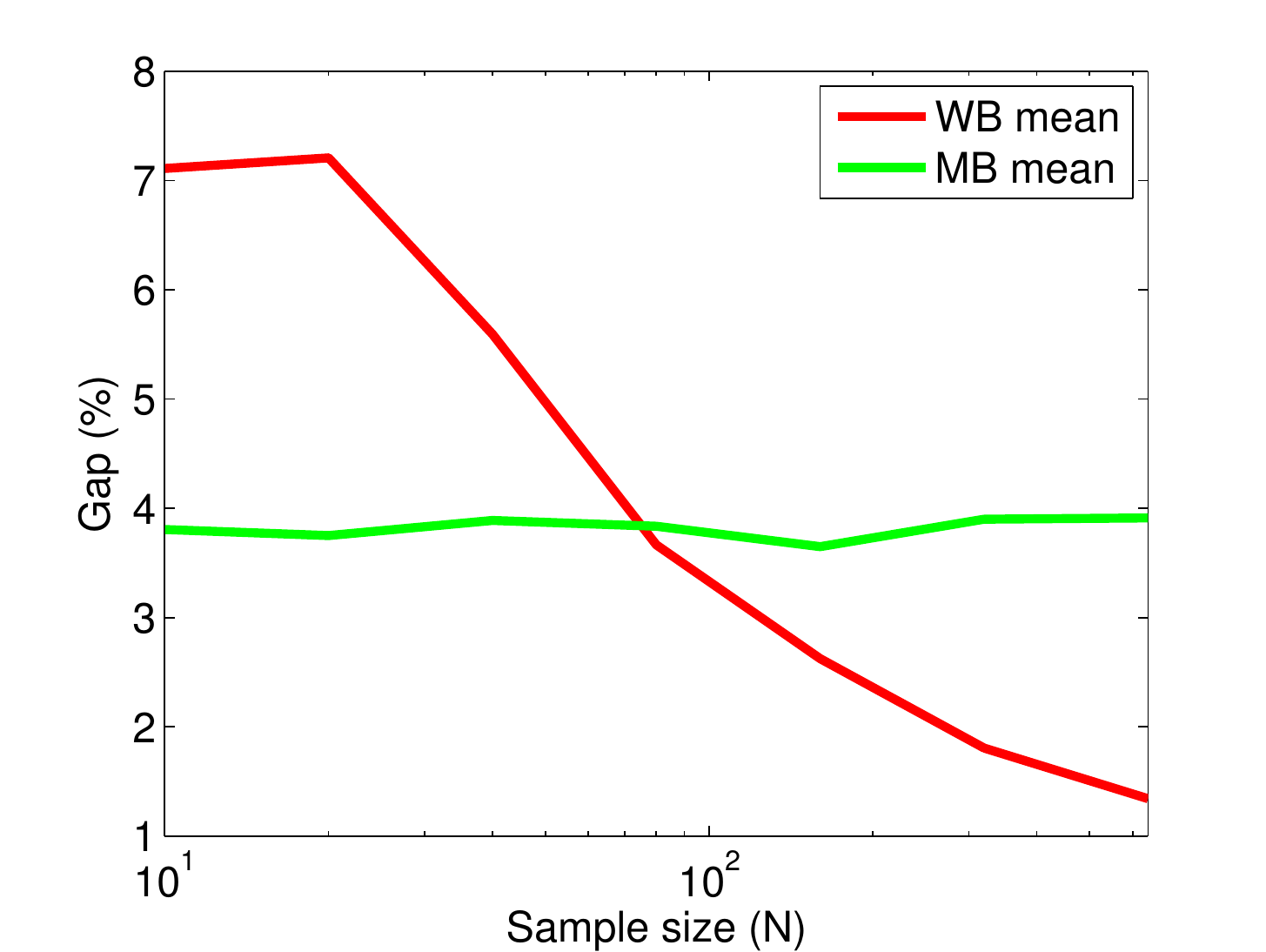}
  \caption{The average gaps from both MB and WB for the project management problem over the seven cases:
$N \in \{10$, $20$, $40$, $80$, $160$, $320$, $640\}$.
}
  \label{Fig:multi_dist_pnp}
\end{figure} 

\subsubsection{Instances with different underlying distributions}

In this experiment, we consider seven cases: $N \in \{ 10$, $20$, $40$, $80$, 
$160$, $320$, $640\}$. For each case, we randomly generate $100$ trials in 
which $N$ independent samples are drawn from a randomly generated 
truncated joint normal distribution. Then, for each trial in each case, we solve 
and instance of (\ref{equ:vdp_sdp}) with a Wasserstein 
radius corresponding to a $0.90$ empirical confidence level. 
We solve MB where the first two moments are approximated 
by computing the sample mean and variance of $100000$ samples. 
We also simulate the expected optimal value over the $100000$ samples 
for each trial in each case. We compute the relative gap between the WB and
SB as well as the relative gap between MB and SB. 
Then, for each case, we take the average of the relative
gaps from both WB and MB over the $100$ trials. 
Figure \ref{Fig:multi_dist_pnp} illustrates the average relative gaps 
over the seven cases. 
Clearly, the upper bound from WB approaches to the simulated
value along with the increase in the size of samples, while the gap between
the bound from MB and the simulated value remains relatively the same
as the sample size increases. Table \ref{Tab:real_reliability_pnp} shows
the percentage of the $100$ trials where the optimal values
from WB are greater than or equal to the corresponding
simulated optimal values over the seven cases.  

\begin{table}[tbp]
\centering
\begin{tabular}{c|ccccccc}  
Case number & 1 & 2 & 3 & 4 & 5 & 6 & 7    \\ \hline
Empirical confidence level & 1.00 & 1.00 & 1.00 & 1.00 & 0.97 & 0.98 & 0.97   
\end{tabular}
\caption{the percentage of the $100$ trials where the optimal values
from WB are greater than or equal to the corresponding
simulated optimal values over the $7$ cases for
the project management problem.}
\label{Tab:real_reliability_pnp}
\end{table}

\subsection{Knapsack problem} \label{sec:kp}

A standard knapsack problem is defined as follows: given a set of items
and each with a weight and a value, the problem is to determine the number 
of items to include in a knapsack such that the total weight is less than or equal to
a given capacity limit and the total value is maximized; see the detail in 
\cite{Martello.Toth.1990}. Let $w_i$ and $\zeta_i$ be the weight and value 
of item $i \ (i = 1, \ldots, n)$ respectively. Let $W$ be the maximum weight 
capacity of the knapsack. Then, the knapsack problem can be formulated 
as an integer program:
\[
v(\zeta) := \max \left \{ \sum_{i = 1}^n \zeta_ix_i : \sum_{i=1}^n w_i x_i \leq W, \ x_i \in \{0, 1\} \right\},
\]
where $x_i$ represents the number of item $i$ to include in the knapsack. 
Assume that the values of the items are random and follow an unknown 
joint distribution. Assume also that we can collect a dataset containing $N$ 
samples with each corresponding to an observation of the $n$ item values. 
In such cases, we would like to compute a data-driven distributionally robust 
upper bound on the expected maximum value of the knapsack. We can 
approximate the upper bound by solving problem (\ref{equ:vdp_sdp}).

We consider an instance with $n=4, w = (5, \ 4, \ 6, \ 3 )^T$,
and $W = 10$. The true distribution $\PP$ of $\zeta$ is assumed to be jointly lognormal
with first and second moments given by $\mu_{\text{log}} \in \RR^4$ and 
$\Sigma_{\text{log}} \in \SYM^4$, respectively. Similar to the procedure described in 
Section \ref{sec:ssa}, we sample $\mu \in \RR^4$ from a uniform distribution $[0, 2]^4$. 
Then, we randomly generate a matrix $\Sigma \in \SYM^4$ as follows:
we set the vector of standard deviations to $\sigma = \tfrac14 e \in \RR^4$,
sample a random correlation matrix $C \in \SYM^4$ using the MATLAB
command `gallery(`randcorr',4)', and set $\Sigma = \diag(\sigma)C\diag(\sigma)+\mu\mu^T$. 
Then $\mu_{\text{log}}$ and $\Sigma_{\text{log}}$ can be computed
based on (\ref{equ:moments}). We can easily cast this problem into
our framework. For simplicity, we skip the details. It is also
straightforward to check that the conditions in Assumptions \ref{ass:feasible_bound},
\ref{ass:binary}, and \ref{ass:convex} are satisfied. Although 
Assumption \ref{ass:uncertainty_bound} is not satisfied, we still can solve
(\ref{equ:vdp_sdp}) to obtain a valid upper bound on the expected optimal
value of the knapsack problem. 

\subsubsection{Instances with the same underlying distribution}

In the first experiment, we focus on a particular underlying distribution
$\PP$ and consider eight cases: $N \in \{10$, $20$, $40$, 
$80$, $160$, $320$, $640$, $1280 \}$. For each case, we run $100$ 
trials and in each trial we randomly generate a dataset $\widehat \Theta_N$ 
containing $N$ independent samples from $\PP$. Similarly, we derive
empirical Wasserstein radii for each case. Then, for each trial in each
case, we solve an instance of (\ref{equ:vdp_sdp}) with an empirical Wasserstein radius 
corresponding to an empirical confidence level of $0.90$. We compare our approach 
with MB where the first two moments are computed by (\ref{equ:moments}). 
We simulate the expected optimal value over $100000$ samples. 
Note that we solve an integer program for each sample in the simulation. 
Figure \ref{Fig:kp_single} shows that WB provides weaker bounds on the 
expected optimal value for smaller sample sizes. However, as the size of 
samples increases, WB provides stronger bounds and the bounds get 
relatively close to the simulated value. In contrast, the bounds from MB 
remains the same regardless of the change of sample sizes. We remark
that the upper bound computed by WB may not converge to the true
expected optimal value as the sample size increases to infinity; see the trend 
shown in Figure \ref{Fig:kp_single}. This is due to the fact that problem 
(\ref{equ:vdp_sdp}) is a relaxation of problem (\ref{equ:vdp_cop}) and the
fact the relaxation is not tight in this example. 

\begin{figure}[ht]
  \centering
  \includegraphics[width=5.0in]{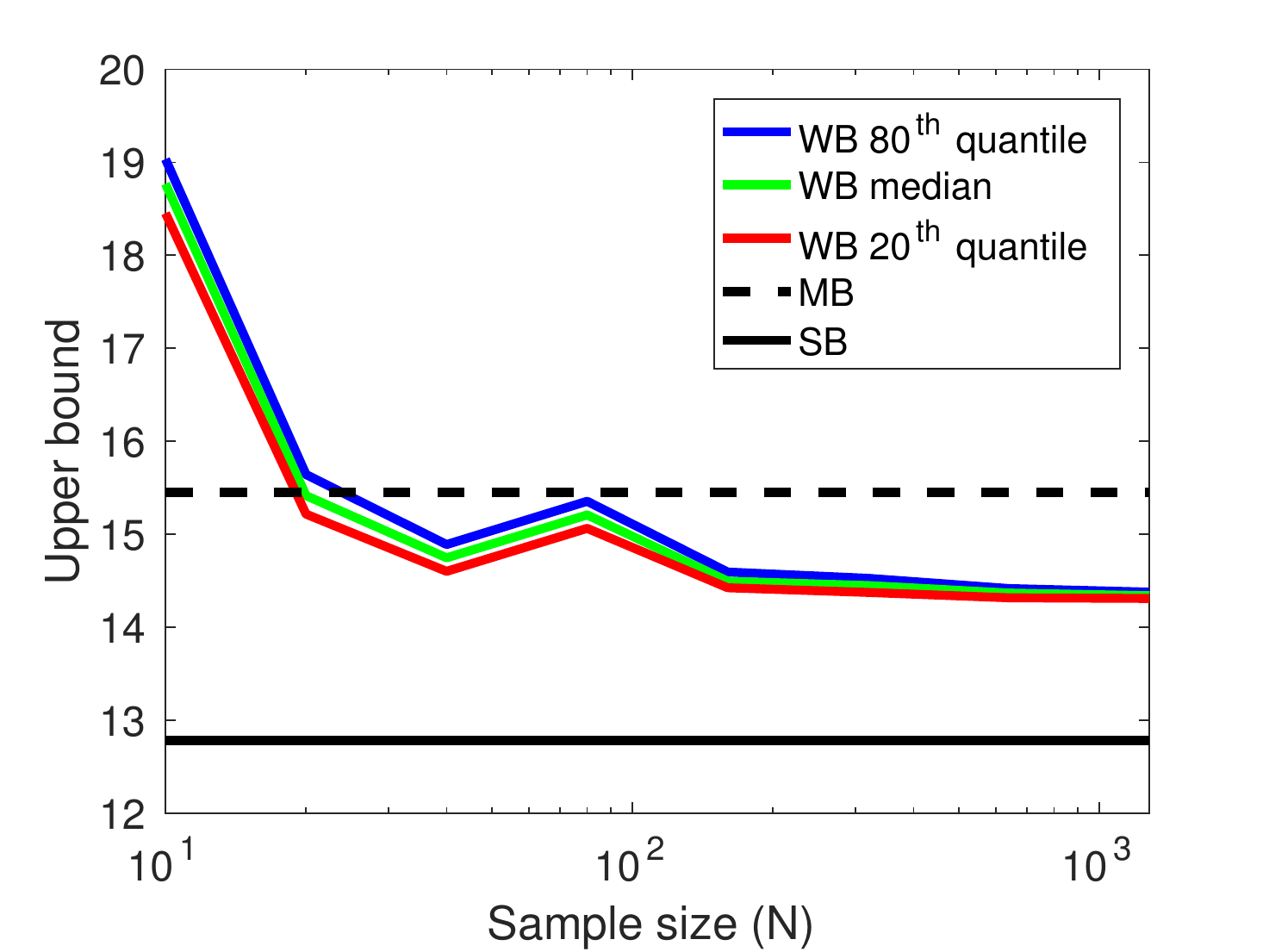}
  \caption{The comparison of WB and MB for the knapsack problem over different
sample sizes for a particular randomly generated underlying distribution. }
  \label{Fig:kp_single}
\end{figure}

\begin{figure}[ht]
  \centering
  \includegraphics[width=5.0in]{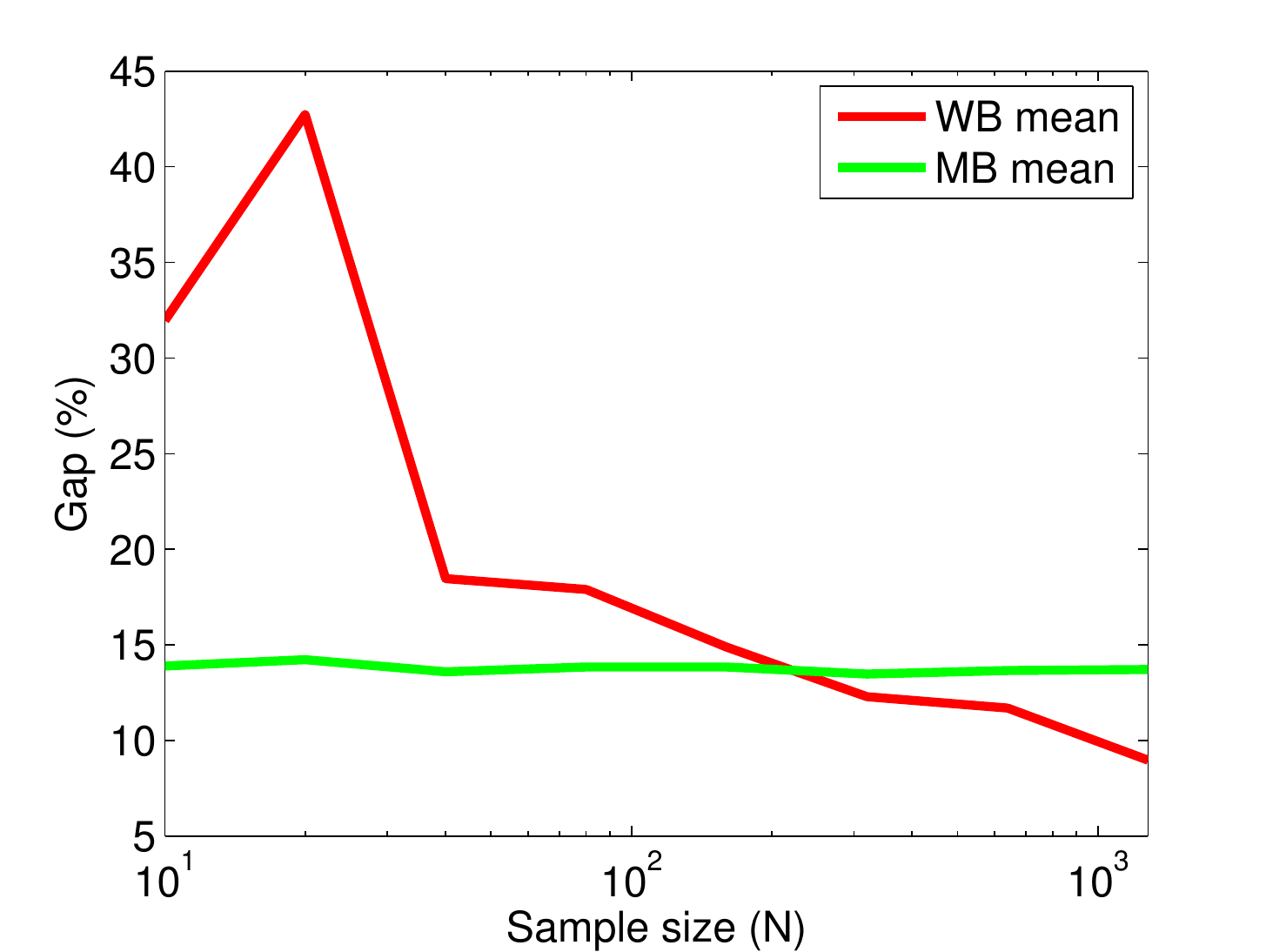}
  \caption{Illustration of the average gaps from both MB and WB in the case of
$N \in \{10$, $20$, $40$, $80$, $160$, $320$, $640\}$.
The blue line represents the average gap between the optimal values from WB and 
the simulated values;
the red line represents the average gap between the optimal values from MB and 
the simulated values.}
  \label{Fig:kp_multi}
\end{figure} 

\subsubsection{Instances with different underlying distributions}

This experiment considers eight cases: $N \in \{ 10$, $20$, $40$, $80$, $160$, 
$320$, $640$, $1280 \}$. For each case, we randomly generate 
$100$ trials with each trial is drawn from a randomly generated 
joint lognormal distribution. Then, for each trial in
each case, we solve an instance of (\ref{equ:vdp_sdp}) with a Wasserstein 
radius corresponding to a $0.90$ empirical confidence level. 
We simulate the expected optimal value over $100000$ samples 
for each trial in each case. Next, we compute the relative gap 
between the values of WB and SB as well as the relative gap between
the values of MB and SB. Then, for each case, we take the average of 
the relative gaps from both WB and MB over the $100$ trials. 
Figure \ref{Fig:kp_multi} illustrates the average relative gaps over 
the seven cases. Clearly, the WB gap becomes narrower as the 
sample size increases, while the MB gap remains relatively the same. 

Table \ref{Tab:real_reliability_kp} shows
the percentage of the $100$ trials where the optimal values
from WB are greater than or equal to the corresponding
simulated optimal values over the eight cases.  

\begin{table}[tbp]
\centering
\begin{tabular}{c|cccccccc}  
Case number & 1 & 2 & 3 & 4 & 5 & 6 & 7 & 8   \\ \hline
Empirical confidence level & 1.00 & 1.00 & 1.00 & 1.00 & 1.00 & 1.00 & 1.00 & 1.00   
\end{tabular}
\caption{The percentage of the $100$ trials where the optimal values
from WB are greater than or equal to the corresponding simulated optimal values over the $8$ cases for
the knapsack problem.}
\label{Tab:real_reliability_kp}
\end{table}

\section{Concluding Remarks} \label{sec:conclusion}

In this paper, we have studied the expected optimal value of a mixed 0-1
programming problem with uncertain objective coefficients following a
joint distribution whose information is not known exactly but a set 
of independent samples can be collected. Using the samples, we
have constructed a Wasserstein-based ambiguity set that contains
the true distribution with a desired confidence level. We proposed
an approach to compute the upper bound on the expected optimal
value. Then under mild assumption, the problem was reformulated
to a copositive program, which leads to a semidefinite-based relaxation. 
We have validated the effectiveness of our approach over three
applications.

\section*{Acknowledgements}
We would like to thank Kurt Anstreicher, Qihang Lin, Luis Zuluaga,
and Tianbao Yang for many useful suggestions, which helped us
improve the results of the paper.



\end{onehalfspace}

\bibliographystyle{plain}

\bibliography{distr}

\begin{thebibliography}{10}

\bibitem{Aldous.2001}
David~J Aldous.
\newblock The $\zeta$ (2) limit in the random assignment problem.
\newblock {\em Random Structures \& Algorithms}, 18(4):381--418, 2001.

\bibitem{Anstreicher.2009}
Kurt~M Anstreicher.
\newblock Semidefinite programming versus the reformulation-linearization
  technique for nonconvex quadratically constrained quadratic programming.
\newblock {\em Journal of Global Optimization}, 43(2-3):471--484, 2009.

\bibitem{mosek}
MOSEK ApS.
\newblock {\em The MOSEK optimization toolbox for MATLAB manual. Version 8.0.},
  2016.

\bibitem{Ben.Hertog.Waegenaere.Melenberg.Rennen.2013}
Aharon Ben-Tal, Dick Den~Hertog, Anja De~Waegenaere, Bertrand Melenberg, and
  Gijs Rennen.
\newblock Robust solutions of optimization problems affected by uncertain
  probabilities.
\newblock {\em Management Science}, 59(2):341--357, 2013.

\bibitem{Bertsimas.Gupta.Kallus.2013}
Dimitris Bertsimas, Vishal Gupta, and Nathan Kallus.
\newblock Data-driven robust optimization.
\newblock {\em arXiv preprint arXiv:1401.0212}, 2013.

\bibitem{Bertsimas.Gupta.Kallus.2014}
Dimitris Bertsimas, Vishal Gupta, and Nathan Kallus.
\newblock Robust saa.
\newblock {\em arXiv preprint arXiv:1408.4445}, 2014.

\bibitem{Bertsimas.Natarajan.Teo.2004}
Dimitris Bertsimas, Karthik Natarajan, and Chung-Piaw Teo.
\newblock Probabilistic combinatorial optimization: Moments, semidefinite
  programming, and asymptotic bounds.
\newblock {\em SIAM Journal on Optimization}, 15(1):185--209, 2004.

\bibitem{Bertsimas.Natarajan.Teo.2006}
Dimitris Bertsimas, Karthik Natarajan, and Chung-Piaw Teo.
\newblock Persistence in discrete optimization under data uncertainty.
\newblock {\em Mathematical programming}, 108(2):251--274, 2006.

\bibitem{Bertsimas.Popescu.2005}
Dimitris Bertsimas and Ioana Popescu.
\newblock Optimal inequalities in probability theory: A convex optimization
  approach.
\newblock {\em SIAM Journal on Optimization}, 15(3):780--804, 2005.

\bibitem{Bowman.1995}
RA~Bowman.
\newblock Efficient estimation of arc criticalities in stochastic activity
  networks.
\newblock {\em Management Science}, 41(1):58--67, 1995.

\bibitem{Burer.2009}
Samuel Burer.
\newblock On the copositive representation of binary and continuous nonconvex
  quadratic programs.
\newblock {\em Mathematical Programming}, 120(2):479--495, 2009.

\bibitem{Burer.2012}
Samuel Burer.
\newblock Copositive programming.
\newblock In {\em Handbook on semidefinite, conic and polynomial optimization},
  pages 201--218. Springer, 2012.

\bibitem{Burer.2015}
Samuel Burer.
\newblock A gentle, geometric introduction to copositive optimization.
\newblock {\em Mathematical Programming}, 151(1):89--116, 2015.

\bibitem{Burer.Dong.2012}
Samuel Burer and Hongbo Dong.
\newblock Representing quadratically constrained quadratic programs as
  generalized copositive programs.
\newblock {\em Operations Research Letters}, 40:203--206, 2012.

\bibitem{Calafiore.Ghaoui.2006}
Giuseppe~Carlo Calafiore and Laurent El~Ghaoui.
\newblock On distributionally robust chance-constrained linear programs.
\newblock {\em Journal of Optimization Theory and Applications}, 130(1):1--22,
  2006.

\bibitem{Chen.Sim.Xu.2016}
Zhi Chen, Melvyn Sim, and Huan Xu.
\newblock Distributionally robust optimization with infinitely constrained
  ambiguity sets.
\newblock Working Paper, 2016.

\bibitem{Delage.Ye.2010}
Erick Delage and Yinyu Ye.
\newblock Distributionally robust optimization under moment uncertainty with
  application to data-driven problems.
\newblock {\em Operations research}, 58(3):595--612, 2010.

\bibitem{Dodin.1985}
Bajis Dodin.
\newblock Bounding the project completion time distribution in pert networks.
\newblock {\em Operations Research}, 33(4):862--881, 1985.

\bibitem{Duchi.Glynn.Namkoong.2016}
John Duchi, Peter Glynn, and Hongseok Namkoong.
\newblock Statistics of robust optimization: a generalized empirical likelihood
  approach.
\newblock {\em arXiv preprint arXiv:1610.03425}, 2016.

\bibitem{Dupavcova.1987}
Jitka Dupa{\v{c}}ov{\'a}.
\newblock The minimax approach to stochastic programming and an illustrative
  application.
\newblock {\em Stochastics: An International Journal of Probability and
  Stochastic Processes}, 20(1):73--88, 1987.

\bibitem{Eichfelder.Jahn.2008}
G.~Eichfelder and J.~Jahn.
\newblock Set-semidefinite optimization.
\newblock {\em Journal of Convex Analysis}, 15:767--801, 2008.

\bibitem{Erdougan.Iyengar.2006}
E~Erdo{\u{g}}an and Garud Iyengar.
\newblock Ambiguous chance constrained problems and robust optimization.
\newblock {\em Mathematical Programming}, 107(1-2):37--61, 2006.

\bibitem{Esfahani.Khun.2015}
Peyman~Mohajerin Esfahani and Daniel Kuhn.
\newblock Data-driven distributionally robust optimization using the
  wasserstein metric: Performance guarantees and tractable reformulations.
\newblock {\em arXiv preprint arXiv:1505.05116}, 2015.

\bibitem{Ghaoui.Oks.Oustry.2003}
Laurent~El Ghaoui, Maksim Oks, and Francois Oustry.
\newblock Worst-case value-at-risk and robust portfolio optimization: A conic
  programming approach.
\newblock {\em Operations research}, 51(4):543--556, 2003.

\bibitem{Grotschel.Lovasz.Shrijver.1981}
Martin Gr{\"o}tschel, L{\'a}szl{\'o} Lov{\'a}sz, and Alexander Schrijver.
\newblock The ellipsoid method and its consequences in combinatorial
  optimization.
\newblock {\em Combinatorica}, 1(2):169--197, 1981.

\bibitem{Hagstrom.1988}
Jane~N Hagstrom.
\newblock Computational complexity of pert problems.
\newblock {\em Networks}, 18(2):139--147, 1988.

\bibitem{Halliwell.2015}
Leigh~J Halliwell.
\newblock The lognormal random multivariate.
\newblock In {\em Casualty Actuarial Society E-Forum, Spring 2015}.

\bibitem{Hanasusanto.Kuhn.2016}
Grani~A Hanasusanto and Daniel Kuhn.
\newblock Conic programming reformulations of two-stage distributionally robust
  linear programs over wasserstein balls.
\newblock {\em arXiv preprint arXiv:1609.07505}, 2016.

\bibitem{Hanasuanto.Kuhn.Wallace.Zymler.2015}
Grani~A. Hanasusanto, Daniel Kuhn, Stein~W. Wallace, and Steve Zymler.
\newblock Distributionally robust multi-item newsvendor problems with
  multimodal demand distributions.
\newblock {\em Mathematical Programming}, 152(1):1--32, 2015.

\bibitem{Hanasusanto.Roitch.Kuhn.Wiesemann.2017}
Grani~A Hanasusanto, Vladimir Roitch, Daniel Kuhn, and Wolfram Wiesemann.
\newblock Ambiguous joint chance constraints under mean and dispersion
  information.
\newblock {\em Operations Research}, 2017.

\bibitem{Hu.Hong.2013}
Zhaolin Hu and L~Jeff Hong.
\newblock Kullback-leibler divergence constrained distributionally robust
  optimization.
\newblock {\em Available at Optimization Online}, 2013.

\bibitem{Jiang.Guan.2015}
Ruiwei Jiang and Yongpei Guan.
\newblock Data-driven chance constrained stochastic program.
\newblock {\em Mathematical Programming}, pages 1--37, 2015.

\bibitem{Klabjan.Simchi.Song.2013}
Diego Klabjan, David Simchi-Levi, and Miao Song.
\newblock Robust stochastic lot-sizing by means of histograms.
\newblock {\em Production and Operations Management}, 22(3):691--710, 2013.

\bibitem{Lam.2016}
Henry Lam.
\newblock Recovering best statistical guarantees via the empirical
  divergence-based distributionally robust optimization.
\newblock {\em arXiv preprint arXiv:1605.09349}, 2016.

\bibitem{lofberg2004yalmip}
Johan Lofberg.
\newblock Yalmip: A toolbox for modeling and optimization in matlab.
\newblock In {\em Computer Aided Control Systems Design, 2004 IEEE
  International Symposium on}, pages 284--289. IEEE, 2004.

\bibitem{Martello.Toth.1990}
Silvano Martello and Paolo Toth.
\newblock {\em Knapsack Problems: Algorithms and Computer Implementations}.
\newblock John Wiley \& Sons, Inc., New York, NY, USA, 1990.

\bibitem{Mohring.2001}
Rolf~H M{\"o}hring.
\newblock Scheduling under uncertainty: Bounding the makespan distribution.
\newblock In {\em Computational Discrete Mathematics}, pages 79--97. Springer,
  2001.

\bibitem{Natarajan.Song.Teo.2009}
Karthik Natarajan, Miao Song, and Chung-Piaw Teo.
\newblock Persistency model and its applications in choice modeling.
\newblock {\em Management Science}, 55(3):453--469, 2009.

\bibitem{Natarajan.Teo.2017}
Karthik Natarajan and Chung~Piaw Teo.
\newblock On reduced semidefinite programs for second order moment bounds with
  applications.
\newblock {\em Mathematical Programming}, 161(1):487--518, 2017.

\bibitem{Natarajan.Teo.Zheng.2011}
Karthik Natarajan, Chung~Piaw Teo, and Zhichao Zheng.
\newblock Mixed 0-1 linear programs under objective uncertainty: A completely
  positive representation.
\newblock {\em Operations research}, 59(3):713--728, 2011.

\bibitem{Pflug.Wozabal.2007}
Georg Pflug and David Wozabal.
\newblock Ambiguity in portfolio selection.
\newblock {\em Quantitative Finance}, 7(4):435--442, 2007.

\bibitem{Rubner.Tomasi.Guibas.2000}
Yossi Rubner, Carlo Tomasi, and Leonidas~J Guibas.
\newblock The earth mover's distance as a metric for image retrieval.
\newblock {\em International journal of computer vision}, 40(2):99--121, 2000.

\bibitem{Shapiro.2001}
Alexander Shapiro.
\newblock On duality theory of conic linear problems.
\newblock In {\em Semi-infinite programming}, pages 135--165. Springer, 2001.

\bibitem{Sherali.Adams.2013}
Hanif~D Sherali and Warren~P Adams.
\newblock {\em A reformulation-linearization technique for solving discrete and
  continuous nonconvex problems}, volume~31.
\newblock Springer Science \& Business Media, 2013.

\bibitem{Sturm.Zhang.2003}
Jos~F Sturm and Shuzhong Zhang.
\newblock On cones of nonnegative quadratic functions.
\newblock {\em Mathematics of Operations Research}, 28(2):246--267, 2003.

\bibitem{Vandenberghe.Boyd.Comanor.2007}
Lieven Vandenberghe, Stephen Boyd, and Katherine Comanor.
\newblock Generalized chebyshev bounds via semidefinite programming.
\newblock {\em SIAM review}, 49(1):52--64, 2007.

\bibitem{Wiesemann.Kuhn.Sim.2014}
Wolfram Wiesemann, Daniel Kuhn, and Melvyn Sim.
\newblock Distributionally robust convex optimization.
\newblock {\em Operations Research}, 62(6):1358--1376, 2014.

\bibitem{Wolsey.1998}
Laurence~A Wolsey.
\newblock {\em Integer programming}, volume~42.
\newblock Wiley New York, 1998.

\bibitem{Zymler.Kuhn.Rustem.2013_1}
Steve Zymler, Daniel Kuhn, and Ber{\c{c}} Rustem.
\newblock Distributionally robust joint chance constraints with second-order
  moment information.
\newblock {\em Mathematical Programming}, 137(1-2):167--198, 2013.

\bibitem{Zymler.Kuhn.Rustem.2013_2}
Steve Zymler, Daniel Kuhn, and Ber{\c{c}} Rustem.
\newblock Worst-case value at risk of nonlinear portfolios.
\newblock {\em Management Science}, 59(1):172--188, 2013.

\end{thebibliography}



\end{document}